\documentclass[11pt,a4paper]{amsart}
\usepackage[margin=1.1in]{geometry}
\usepackage{amsmath,amssymb,amsthm,mathrsfs}
\usepackage[colorlinks=true,linkcolor=blue,citecolor=blue]{hyperref}
\usepackage{enumitem}
\usepackage{tikz}
\usepackage{amsfonts,amsmath,oldgerm,amscd}
\usepackage{quiver}
\usepackage{tikz-cd}

\theoremstyle{plain}
\newtheorem{thm}{Theorem}[section]
\newtheorem{lem}[thm]{Lemma}
\newtheorem{prop}[thm]{Proposition}
\newtheorem{cor}[thm]{Corollary}

\theoremstyle{definition}
\newtheorem{defn}[thm]{Definition}
\newtheorem{ex}[thm]{Example}

\theoremstyle{remark}

\usepackage{lipsum}
\date{}

\title[GW groups and cancellation of quadratic spaces]{INVARIANCE OF GROTHENDIECK-WITT GROUPS UNDER SUBINTEGRAL EXTENSIONS AND CANCELLATION OF QUADRATIC SPACES}
\author{Jebasingh R}
\address{Department of Mathematics, Indian Insitute of Technology Madras, Chennai, Tamil Nadu 600036}
\email{jebasinghr3@gmail.com, ma25r014@smail.iitm.ac.in}
\subjclass[2020]{19G12, 11E81}
\keywords{Grothendieck-Witt groups, subintegral extensions, quadratic spaces, cancellation}
\begin{document}
\maketitle
\begin{abstract}
    Motivated by the work of Ischebeck on the behaviour of algebraic $K$-theoretic functors under subintegral extensions, we investigate the analogous question in Hermitian $K$-theory. We prove that Grothendieck-Witt groups in degree zero are invariant under  subintegral extensions. We also investigate the cancellation problem for quadratic spaces over geometric subrings of polynomial rings and establish a cancellation theorem for quadratic spaces of sufficiently large Witt index.
\end{abstract}
\section{Introduction}
Let $R \subset S$ be a finite integral extension of rings. For a point $x \in \operatorname{Spec}(R)$, let
$x_1,\ldots,x_n \in \operatorname{Spec}(S)$
be the points lying above $x$. Traverso \cite[p. 588]{traverso1970seminormality} constructs an intermediate ring
$R \subset R' \subset S$
with the property that there exists a unique point $x' \in \operatorname{Spec}(R')$ lying above $x$, and the induced extension of residue fields $\kappa(x)\rightarrow \kappa(x')$
is an isomorphism. Moreover, $R'$ is the largest subring of $S$ containing $R$ with this property.
The seminormalisation of $R$ in $S$, denoted by $^{+}_{S}R$, is defined to be the largest subring
$R \subset  ^+_SR \subset S$
such that, for every point $x \in \operatorname{Spec}(R)$, there exists a unique point of $\operatorname{Spec}(^{+}_{S}R)$ lying above $x$, and the induced map on residue fields is an isomorphism.

For a reduced ring $R$, Traverso showed that when the normalization of $R$ in $Q(R)$ is finite over $R$, $R$ is seminormal in $Q(R)$ ($R=^+_{Q(R)}R$, where $Q(R)$ is the total ring of fractions) is equivalent to the $\operatorname{Pic}(R)\simeq \operatorname{Pic}(R[X_1,\cdots, X_n])$ for any natural number $n$.
Subsequently, Swan \cite{swan1980seminormality}, motivated by a criterion of Hamann \cite[Proposition 2.10]{hamann1975r}, defined a ring $R$ to be seminormal if its reduced and whenever $b,c\in R$ satisfy $b^3=c^2$, there is an $a\in R$ with $a^2=b$ and $a^3=c$. The definition of Swan is equivalent to that of Traverso when $Q(R)$ is product of fields. He showed that for any commutative ring the invariance of Picard group under polynomial extension is equivalent to its reduced ring being seminormal generalising the result of Traverso.

To study seminormality, Swan introduced the more general notion of a subintegral extension (see  \ref{subintdefn}). The study of algebraic $K$-theoretical functors under subintegral extensions was subsequently undertaken by Ischebeck \cite{ischebeck1989subintegral}. He showed that finite subintegral extensions of Noetherian rings induce isomorphisms on the $G$-groups $G_i(S)\xrightarrow{\sim} G_i(R)$, for $i=0,1$
and the Chow groups $CH_r(S)\xrightarrow{\sim} CH_r(R)$
for all $r$. Furthermore, he established several results concerning the behaviour of $K$-groups and Nil $K$-groups under subintegral extension. These results motivate the study of analogous questions for Hermitian $K$-theory. In this note we initiate such a study and prove the following.
\begin{thm}
     Let $R\subset S$ be a finite subintegral extension. Let $1/2\in R$. Then the induced natural map $GW^{[n]}_0(R)\xrightarrow[]{}GW^{[n]}_0(S)$ is an isomorphism for $n=0,2$.
\end{thm}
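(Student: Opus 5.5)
We reduce the theorem to the case of an elementary subintegral extension and then use a Milnor-type patching argument for quadratic and symplectic spaces. A finite subintegral extension $R\subset S$ is a finite tower $R=R_0\subset R_1\subset\cdots\subset R_m=S$ of elementary subintegral extensions $R_{i+1}=R_i[b]$ with $b^2,b^3\in R_i$ (Swan). Isomorphisms compose, so it suffices to treat $S=R[b]$ with $b^2,b^3\in R$.

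In that case let $I=\{s\in S: sS\subset R\}$ be the conductor, which contains $b^2S$. The square
\[
\begin{tikzcd}
R \arrow[r] \arrow[d] & S \arrow[d] \\
R/I \arrow[r] & S/I
\end{tikzcd}
\]
is a Milnor square. The ideal $IS/I\cdot$ and the extension $R/I\subset S/I$ are governed by nilpotents, since $\bar b^2=0$ in $S/I$ and $S/I=R/I+\bar b\,R/I$. So $R/I\subset S/I$ is a subintegral extension obtained by adjoining a square-zero element.

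The key steps are as follows.

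\textbf{Step 1 (injectivity).} If two spaces over $R$ become isometric over $S$, we show they are stably isometric over $R$. Since $S/I$ is a nilpotent thickening of a ring close to $R/I$, we will use the fact that $GW_0^{[n]}$ is invariant under nilpotent extensions when $1/2$ is present: symmetric or alternating forms and their isometries lift along surjections with nilpotent kernel, because idempotent-like averaging with $1/2$ and Hensel-type arguments apply. We then combine this with the Mayer–Vietoris exact sequence for $GW_0$ and $GW_1$ (or $KO_1$, $KSp_1$) attached to a Milnor square. The relevant segment is $GW_1(S)\oplus GW_1(R/I)\to GW_1(S/I)\to GW_0(R)\to GW_0(S)\oplus GW_0(R/I)\to GW_0(S/I)$. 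This exact sequence for degree-zero Hermitian $K$-theory of Milnor squares follows from Milnor patching of nondegenerate forms.

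\textbf{Step 2.} This is the main obstacle: showing that $GW_1(S)\oplus GW_1(R/I)\to GW_1(S/I)$ is surjective and that the map $GW_0(R/I)\to GW_0(S/I)$ is an isomorphism. We argue as follows.
\begin{itemize}
\item $S/I \to (S/I)_{\rm red}$ and $R/I\to (R/I)_{\rm red}$ have nilpotent kernels, and the reductions agree: subintegrality gives a bijection on spectra with equal residue fields, and for the elementary extension the reductions literally coincide.
\item $GW_0$ is nil-invariant, so $GW_0(R/I)\cong GW_0(S/I)$.
\item For $GW_1$, we show that the orthogonal (resp.\ symplectic) group of $S/I$ is generated by the image of that of $R/I$ together with elementary or unipotent elements $1+N$ with $N$ nilpotent, and that the latter lift to $S$. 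Here $1/2\in R$ is used via the Cayley transform to write such isometries as $(1-A)(1+A)^{-1}$ with $A$ skew-adjoint, and $A$ lifts.
\end{itemize}

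\textbf{Step 3 (surjectivity).} By the same sequence, a form over $S$ whose restriction to $S/I$ comes from $R/I$ patches, via Milnor's construction, with that form over $R/I$ to give a form over $R$. This completes the proof for $n=0$ (symmetric) and $n=2$ (alternating), the two cases being handled uniformly by the sign of the form.
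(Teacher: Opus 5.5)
Your skeleton matches the paper's. You reduce to $S=R[b]$ with $b^2,b^3\in R$, take the conductor square and its Mayer--Vietoris sequence, and use nil-invariance of $GW_0$ to get $GW_0(R/I)\cong GW_0(S/I)$. Then you conclude once $GW_1(S)\oplus GW_1(R/I)\to GW_1(S/I)$ is onto. \textbf{The gap is exactly in that surjectivity, which is the heart of the theorem.}

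Write $J$ for the nilradical of $S/I$. The factorization $O(S/I)=\operatorname{im}O(R/I)\cdot O(S/I,J)$ is harmless. What fails is your claim that elements of $O(S/I,J)$ lift to $S$ via the Cayley transform. A skew-adjoint lift $\tilde A$ of $A$ has entries in the preimage of $J$ in $S$, namely $\sqrt{I}$. That ideal is neither nilpotent nor contained in the Jacobson radical of $S$, so $1+\tilde A$ need not be invertible over $S$.

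Concretely, take $R=\mathbb{C}[X^2,X^3]\subset S=\mathbb{C}[X]$. Then $I=X^2\mathbb{C}[X]$, $R/I=\mathbb{C}$ and $S/I=\mathbb{C}[X]/(X^2)$. The unipotent isometry $\operatorname{diag}(1+\bar X,1-\bar X)$ of the hyperbolic plane over $S/I$ has no lift to an isometry of $\mathbf h$ over $\mathbb{C}[X]$. Indeed, the latter are $\operatorname{diag}(c,c^{-1})$ or antidiagonal with $c\in\mathbb{C}[X]^\times=\mathbb{C}^\times$.

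A warning sign is that your argument never uses the form. Run for $GL$ (lift $N$, invert $1+\tilde N$), it would make $K_1(S)\oplus K_1(R/I)\to K_1(S/I)$ onto. That would make $K_0(R)\to K_0(S)$ injective, contradicting $\operatorname{Pic}(\mathbb{C}[X^2,X^3])\cong\mathbb{C}$, which is the paper's own counterexample for odd shifts.

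What is missing is a \emph{stable} argument that the classes of $O(S/I,J)$ vanish in $GW_1(S/I)$. This is where $1/2$ and the duality involution genuinely enter. The paper proves $GW_1^{[n]}(A)\cong GW_1^{[n]}(A_{\rm red})$ for $1/2\in A$ using three ingredients:
\begin{itemize}
\item the fibre sequence $K(A)_{hC_2}\to GW^{[n]}(A)\to L^{[n]}(A)$;
\item Wall's nil-invariance of $L$-theory;
\item the homotopy orbit spectral sequence, where the key point is $H_0(C_2,1+J)=0$, because $C_2$ acts on $1+J$ by inversion and $1+J$ is uniquely $2$-divisible.
\end{itemize}
Since $(R/I)_{\rm red}=(S/I)_{\rm red}$, this gives $GW_1(R/I)\cong GW_1(S/I)$ outright.

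Your route can be repaired elementarily along the same lines. Modulo elementary orthogonal (resp.\ symplectic) matrices, which do lift to $S$, an element of $O(S/I,J)$ reduces to a hyperbolic $H(u)$ with $u\in 1+J$. Conjugating by the swap gives $[H(w)]=[H(w^{-1})]=-[H(w)]$ for every $w\in 1+J$. Writing $u=v^2$ with $v\in 1+J$ then yields $[H(u)]=2[H(v)]=0$.
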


We give an example where the isomorphism doesn't hold for odd shift.

\medskip
In Section \ref{section2}, we study the cancellation problem for quadratic spaces. Let $q$ be a quadratic $R$-space. We say that $q$ is cancellative if, whenever $q \perp q'' \simeq q' \perp q''$
for quadratic $R$-spaces $q'$ and $q''$, it follows that $q \simeq q'$.
The cancellation problem for projective modules has a long history in algebraic $K$-theory. Bass \cite{bass1964k} proved a fundamental cancellation theorem for projective modules. Amit Roy \cite{roy1968cancellation} established an analogue of Bass cancellation for quadratic spaces, showing that quadratic spaces with sufficiently large Witt index are cancellative. Later, Parimala Raman \cite{parimala1984cancellation} proved the cancellation of quadratic spaces with large Witt index over polynomial rings.
Projective modules over a class of subrings of polynomial rings known as geometric subrings (see \ref{geometricsubring}) have recently been studied in \cite{banerjee2024subrings}. Motivated, we prove the following result.

\begin{thm}
    Let $R$ be a ring of dimension $d\geq 1$. Let $A$ be a geometric subring of $R[T]$. Let  $q$ be a quadratic space over $A$ of Witt index $\geq d+1$. Then $q$ is cancellative.  
\end{thm}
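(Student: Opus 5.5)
We follow the standard route for cancellation with large Witt index: reduce to a transitivity statement for an orthogonal group acting on unimodular (hyperbolic) vectors, then prove that transitivity with a local-global (Quillen patching) argument adapted to the geometric subring $A\subset R[T]$. Write $q = q_1\perp \mathbb{H}(A)^{d+1}$. By induction on the rank of $q''$, and since $q''$ becomes a summand of a hyperbolic space after adding its negative, it suffices to cancel a single hyperbolic plane $\mathbb{H}(A)$. Suppose $q\perp\mathbb{H}(A)\simeq q'\perp\mathbb{H}(A)$, with $e\in \mathbb{H}(A)$ a hyperbolic basis vector. The images of $e$ on the two sides give two unimodular isotropic vectors $v,w$ in $Q:=q\perp\mathbb{H}(A)$. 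The standard argument (as in Roy \cite{roy1968cancellation}) shows that if the elementary orthogonal group $EO_A(Q)$ acts transitively on unimodular isotropic vectors $v$ whose orthogonal splits off a hyperbolic plane, then the orthogonal complements are isometric, giving $q\simeq q'$. So the goal is transitivity of $EO_A(Q)$ on such vectors when the Witt index of $Q$ is at least $d+2$.

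\textbf{Step 1 (the local and reduced cases).} Over a ring of dimension $\leq d$, Roy's theorem gives transitivity when the Witt index is at least $d+1$. We also need transitivity modulo a suitable element: for a non-zero-divisor $f$ in the conductor-type ideal relating $A$ and $R[T]$ (from the definition of geometric subring \ref{geometricsubring}), $A/fA$ and $A_f$ should be controlled by $R$ and $R[T]$. Over $R[T]$ itself we use Parimala's result \cite{parimala1984cancellation} (polynomial rings over a $d$-dimensional base, Witt index $\geq d+1$). Note that $\dim A = d+1$, so Roy alone would require index $d+2$. The content of the theorem is that index $d+1$ suffices, as for $R[T]$.

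\textbf{Step 2 (patching).} We use a Quillen-type local-global principle for $EO$ over $A$, which should hold for geometric subrings in the same way it is established for projective modules in \cite{banerjee2024subrings}. The principle: if $v(T)$ and $v(0)$ lie in the same $EO$-orbit after localizing at every maximal ideal $\mathfrak m$ of $R$, then they are in the same orbit over $A$. Locally, $R_{\mathfrak m}$ is local, so over $A_{\mathfrak m}$ the space is extended-type. We show that $v$ can be moved into the ``constant'' part, namely into $q\otimes R$, using the structure of $A_{\mathfrak m}$ as a geometric subring of $R_{\mathfrak m}[T]$ together with a Suslin/Parimala-style argument over a local base. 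Then Roy over the $d$-dimensional ring $R$ finishes the argument.

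\textbf{Main obstacle.} The hardest step is establishing the local-global principle for $EO_A(Q)$, together with the local transitivity, over a geometric subring rather than a genuine polynomial ring. The usual proof uses the substitution $T\mapsto T+cT'$, and this need not preserve $A$. My first attempt would be to use the defining property of geometric subrings: that they are stable under the relevant substitutions $T\mapsto aT$, or contain $R+T^{N}R[T]$. This lets us replace $T$ by $T^{N}$-scaled substitutions, which keep us inside $A$. If that fails, the fallback is to reduce, via a Milnor square along the conductor, to the results over $R[T]$ and over $A/(\text{conductor})$, the latter having dimension $\leq d$.
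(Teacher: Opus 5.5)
\textbf{There is a genuine gap: Step 2, the only place where Witt index $d+1$ could be earned, is not an argument.} Your preliminary reductions are sound. Cancelling $\mathbb{H}(A)$ and passing to transitivity on unimodular isotropic vectors is a legitimate alternative to the paper's reduction to cancelling rank-one forms $\langle u\rangle$. You also correctly see that Roy's theorem on the $(d+1)$-dimensional ring $A$ only gives index $d+2$. The specific problems with Step 2 are these.

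\begin{enumerate}
\item \emph{The local--global principle for $EO_A$ is assumed, and the tools you propose for it are not available.} Definition \ref{geometricsubring} only provides a non-zero-divisor $s\in R$ with $A_s=R_s[T]$. Nothing forces $A$ to be stable under $T\mapsto aT$ or $T\mapsto T+cY$. The inclusion $A\supseteq R+T^NR[T]$ already fails for Rees algebras $R[IT]$ with $I\neq R$.
\item \emph{Comparing $v(T)$ with $v(0)$ presupposes that $q$ is extended from $R$.} This is not a hypothesis of the theorem.
\item \emph{The local step does not reduce anything.} If $R$ is local, then $A_{\mathfrak m}=A$ is still $(d+1)$-dimensional, so moving $v$ ``into the constant part'' over $A_{\mathfrak m}$ is the original problem again. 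The claim that the space is ``extended-type'' there is unsupported, and such statements fail already for projective modules: for $R=k[s]_{(s)}$ and the geometric subring $A=R[sT,T^2,T^3]$, the conductor square gives $\operatorname{Pic}(A)\cong(k,+)$.
\item \emph{The fallback also fails.} $R[T]$ need not be finite over $A$, and the conductor can be zero, e.g.\ for $k[x,xT]\subset k[x,T]$.
\end{enumerate}

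\textbf{What you are missing is the mechanism by which the paper beats the $d+2$ bound.}
\begin{enumerate}
\item \emph{Normalize modulo $F$.} Choose a non-zero-divisor $F\in TR[T]\cap A$. Since $\dim A/FA\le d$, \cite[Corollary 6.4]{roy1968cancellation} lets one normalize the isometry $\beta$ so that it fixes $(0,1)$ modulo $F$.
\item \emph{Work over $A_s$.} Let $S$ be the set of non-zero-divisors of $R$. Over $S^{-1}A=S^{-1}R[T]$, Harder's theorem makes both spaces extended. This yields, over some $A_s$, an isometry that agrees with $\beta$ modulo $F$.
\item \emph{Work over $B=(1+sR)^{-1}A$.} Example \ref{gdf} gives a generalized dimension function on $B$, in the sense of \cite{plumstead1983conjectures}, with values $\le d$. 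This is exactly what allows \cite[Theorem 3.1]{parimala1984cancellation} to be applied with $\operatorname{rank}P\ge d+1$. Explicit elementary transformations that are trivial modulo $F$ then move $\beta(0,1)$ to $(0,1)$.
\item \emph{Patch.} The isometries over $A_s$ and over $B$ both reduce to $\beta$ modulo $F$. They are then patched along the comaximal pair $s$, $1+sR$.
\end{enumerate}

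So the dimension drop comes from working modulo $F$ and from the generalized dimension function on $(1+sR)^{-1}A$. It does not come from a Quillen-type local--global principle over $A$. Without some such mechanism, your proposal does not establish the theorem.
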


\section{Grothendieck-Witt groups and subintegral extensions}\label{section1}

We begin by recalling the following from \cite[Lemma 2.1]{swan1980seminormality}.
\begin{prop}\label{subintegraleq}
Let $R\subset S$ be an extension of rings. Then the following statements are equivalent. 
\begin{enumerate}
    \item The extension $R\subset S$ is integral extension, the induced map $i:\operatorname{Spec}(S)\longrightarrow \operatorname{Spec}(R)$ is bijective, and for each
$\mathfrak{q}\in \operatorname{Spec}(S)$  the
induced field extension of residue fields $R_{\mathfrak{p}}/ \mathfrak{p} R_{\mathfrak{p}} \subset S_{\mathfrak{q}}/{\mathfrak{q} S_{\mathfrak{q}}}$ is trivial,
where $\mathfrak{p}=i(\mathfrak{q})$.
\item The extension $R\subset S$ is an integral extension. For any field $k$, every $k\text{-}$valued point of $\operatorname{Spec}(R)$ can be lifted uniquely to a $k\text{-}$valued point of $\operatorname{Spec}(S)$. 

\[\begin{tikzcd}
	& {\operatorname{Spec}(S)} \\
	{\operatorname{Spec}(k)} & {\operatorname{Spec}(R)}
	\arrow[from=1-2, to=2-2]
	\arrow["{\exists!}", dashed, from=2-1, to=1-2]
	\arrow[from=2-1, to=2-2]
\end{tikzcd}\]
\end{enumerate}
\end{prop}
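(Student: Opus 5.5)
Both conditions require $R\subset S$ to be integral, so we may assume this throughout. The proof consists of comparing the pointwise data attached to a $k$-valued point. A $k$-valued point of $\operatorname{Spec}(R)$ is a ring homomorphism $\phi:R\to k$. It is equivalent to a pair $(\mathfrak p,\iota)$, where $\mathfrak p=\ker\phi$ is prime and $\iota:\kappa(\mathfrak p)=R_{\mathfrak p}/\mathfrak pR_{\mathfrak p}\hookrightarrow k$ is a field embedding. Likewise, a lift $\psi:S\to k$ with $\psi|_R=\phi$ is a pair $(\mathfrak q,\jmath)$ with $\mathfrak q\cap R=\mathfrak p$ and $\jmath:\kappa(\mathfrak q)\to k$ restricting to $\iota$ on $\kappa(\mathfrak p)$. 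I will record this dictionary first and then prove the two implications.

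\emph{(1)$\Rightarrow$(2).} Let $\phi=(\mathfrak p,\iota)$. Bijectivity of $i$ gives a unique prime $\mathfrak q$ over $\mathfrak p$. The residue field extension $\kappa(\mathfrak p)\subset\kappa(\mathfrak q)$ is trivial, so $\iota$ extends uniquely to $\kappa(\mathfrak q)$. Therefore a lift exists, and any lift must use this $\mathfrak q$ and this extension, which gives uniqueness.

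\emph{(2)$\Rightarrow$(1).} Surjectivity of $i$ follows from lying over, which holds for integral extensions. Alternatively, apply (2) to $R\to\kappa(\mathfrak p)$. For injectivity, suppose $\mathfrak q_1\neq\mathfrak q_2$ both lie over $\mathfrak p$. Since $\kappa(\mathfrak q_1)$ and $\kappa(\mathfrak q_2)$ are algebraic over $\kappa(\mathfrak p)$, both embed over $\kappa(\mathfrak p)$ into an algebraic closure $k=\overline{\kappa(\mathfrak p)}$. This produces two lifts $S\to k$ of $R\to\kappa(\mathfrak p)\to k$ with different kernels, contradicting uniqueness.

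For triviality of residue fields, take $k=\overline{\kappa(\mathfrak p)}$ and the unique $\mathfrak q$ over $\mathfrak p$. Lifts of $R\to k$ then correspond to $\kappa(\mathfrak p)$-embeddings $\kappa(\mathfrak q)\to k$, so uniqueness says there is exactly one such embedding. A single embedding into an algebraic closure does not yet force $\kappa(\mathfrak q)=\kappa(\mathfrak p)$, since the extension could be purely inseparable. This is the main obstacle.

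To handle it, I will use a different field. Take $k=\kappa(\mathfrak q)\otimes_{\kappa(\mathfrak p)}\kappa(\mathfrak q)$ modulo a maximal ideal $\mathfrak m$, which is a field. The two maps $x\mapsto x\otimes1$ and $x\mapsto 1\otimes x$ give two $\kappa(\mathfrak p)$-embeddings $\kappa(\mathfrak q)\to k$, hence two lifts of the same $k$-point of $\operatorname{Spec}(R)$. Uniqueness forces $x\otimes1\equiv1\otimes x \pmod{\mathfrak m}$ for all $x$. Now suppose $x\notin\kappa(\mathfrak p)$. Then $1$ and $x$ are linearly independent over $\kappa(\mathfrak p)$, so $x\otimes1-1\otimes x$ is a nonzero element of the tensor product. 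I must still show it can avoid some maximal ideal, that is, that it is not nilpotent.

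In the purely inseparable case $x^{p}\in\kappa(\mathfrak p)$, the element $x\otimes1-1\otimes x$ is nilpotent, so this argument breaks down. I will therefore check this case separately against the statement.

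If it genuinely fails, the fallback is to read (2) as quantifying over all rings mapping to $k$, or to use a field $k$ containing two distinct embeddings. Swan's own proof of Lemma 2.1 presumably handles this, and I would consult it. With the pointwise definition, a purely inseparable residue extension would seem to satisfy (2) but not (1). So I expect the intended reading uses the residue-field-trivial convention of Swan, and I will defer to \cite[Lemma 2.1]{swan1980seminormality} for this final step.
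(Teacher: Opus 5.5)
You have a genuine gap, but it closes in one line. Your (1)$\Rightarrow$(2) argument is correct, and so are surjectivity and injectivity in (2)$\Rightarrow$(1). The paper gives no proof of its own; it only cites Swan's Lemma 2.1. The gap is the final step, triviality of the residue field extensions, which you leave open and defer to Swan. Your diagnosis of the obstacle is also wrong.

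You use only the \emph{uniqueness} half of (2). But (2) also asserts \emph{existence} of a lift for every field $k$, and that settles the step. Take $k=\kappa(\mathfrak p)$ and let $\phi$ be the canonical map $R\to\kappa(\mathfrak p)$. A lift $\psi:S\to\kappa(\mathfrak p)$ has kernel a prime lying over $\mathfrak p$, which equals $\mathfrak q$ by the injectivity you already proved. So $\psi$ induces a field homomorphism $\kappa(\mathfrak q)\to\kappa(\mathfrak p)$ that restricts to the identity on $\kappa(\mathfrak p)$. This map is an injective $\kappa(\mathfrak p)$-linear map, so $\dim_{\kappa(\mathfrak p)}\kappa(\mathfrak q)\le 1$, i.e. $\kappa(\mathfrak q)=\kappa(\mathfrak p)$. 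Neither the algebraic closure nor the tensor product construction is needed.

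Your claim that a purely inseparable residue extension ``would seem to satisfy (2) but not (1)'' is false. The statement holds as written, with the pointwise reading of $k$-valued points; no reinterpretation is needed. For example, take $R=\mathbb{F}_p(t)\subset S=\mathbb{F}_p(t^{1/p})$ and $k=R$. The identity map $R\to R$ is a $k$-valued point of $\operatorname{Spec}(R)$ with no lift to $S$, since $t$ has no $p$-th root in $R$.

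Your test field $k=\overline{\kappa(\mathfrak p)}$ can only detect failure of uniqueness. That is why inseparability looked invisible to you. Existence over the non-closed field $k=\kappa(\mathfrak p)$ is what detects it.
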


\begin{defn}\label{subintdefn}
    An extension $R\subset S$ of rings is called subintegral if any of the equivalent statements of  \ref{subintegraleq} is satisfied.
\end{defn}
 The following result follows from  \ref{subintegraleq} and \cite[Lemma 4.4]{swan1980seminormality}. 
\begin{prop}\label{elementarysub}
  An extension of the form $R\subset S=R[b]$ with $b^2,b^3\in R$ is subintegral.
\end{prop}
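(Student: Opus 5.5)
We will verify condition (2) of Proposition \ref{subintegraleq} for the extension $R\subset S=R[b]$, where $b^2,b^3\in R$.

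\emph{Integrality.} The element $b$ satisfies the monic polynomial $X^2-b^2\in R[X]$, so it is integral over $R$. Since $S=R[b]$ is generated by $b$, it is a finite $R$-module, spanned by $1$ and $b$. In particular $R\subset S$ is integral.

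\emph{Lifting of points.} Fix a field $k$ and a ring homomorphism $\varphi:R\to k$. We must show that $\varphi$ extends uniquely to a homomorphism $\psi:S\to k$. Any extension is determined by the value $\beta=\psi(b)$, and $\beta$ must satisfy
\[
\beta^2=\varphi(b^2),\qquad \beta^3=\varphi(b^3).
\]

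\emph{Uniqueness.} Split into two cases.
\begin{itemize}
\item If $\varphi(b^2)\neq 0$, then $\beta\neq 0$, and necessarily $\beta=\varphi(b^3)/\varphi(b^2)$.
\item If $\varphi(b^2)=0$, then $\beta^2=0$, so $\beta=0$ because $k$ is a field.
\end{itemize}
In both cases $\beta$ is forced, so there is at most one extension.

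\emph{Existence.} This is the main obstacle: we must show that the candidate value actually defines a ring homomorphism on $S=R[b]$. Write $S=R[X]/I$, where $I$ is the kernel of $R[X]\to S$, $X\mapsto b$. Then $\psi$ exists if and only if $\varphi(I)$ vanishes at $\beta$, that is, if and only if the ideal $\varphi(I)k[X]$ is proper with $\beta$ as a common zero.

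The strategy is to pass to a prime. Let $\mathfrak p=\ker\varphi$. By lying over for the integral extension $R\subset S$, there is a prime $\mathfrak q$ of $S$ with $\mathfrak q\cap R=\mathfrak p$. It then suffices to show that $\kappa(\mathfrak p)\to\kappa(\mathfrak q)$ is an isomorphism, for then
\[
S\to\kappa(\mathfrak q)\cong\kappa(\mathfrak p)\to k
\]
extends $\varphi$.

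The field $\kappa(\mathfrak q)$ is generated over $\kappa(\mathfrak p)$ by the image $\bar b$ of $b$. By the same case analysis as in the uniqueness step, $\bar b$ is either $0$ or $\overline{b^3}/\overline{b^2}$, so $\bar b\in\kappa(\mathfrak p)$. Hence $\kappa(\mathfrak q)=\kappa(\mathfrak p)$, which proves existence.

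\emph{Conclusion.} Every $k$-valued point of $\operatorname{Spec}(R)$ lifts uniquely to $\operatorname{Spec}(S)$, and $R\subset S$ is integral. By condition (2) of Proposition \ref{subintegraleq}, the extension $R\subset S$ is subintegral.

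This is presumably the content of \cite[Lemma 4.4]{swan1980seminormality}, which we could alternatively cite directly.
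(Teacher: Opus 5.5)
Your proof is correct. The paper gives no argument of its own here: it combines the criterion of Proposition~\ref{subintegraleq} with \cite[Lemma 4.4]{swan1980seminormality}. So your write-up is a self-contained replacement for that citation, built on the same criterion, rather than a competing strategy.

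Both halves are sound.
\begin{itemize}
\item Uniqueness holds because $\psi(b)$ is forced to be $\varphi(b^3)/\varphi(b^2)$ or $0$.
\item Existence holds because lying over produces a prime $\mathfrak q$ over $\mathfrak p$, and the same computation in the field $\kappa(\mathfrak q)$ places $\bar b$ in $\kappa(\mathfrak p)$.
\end{itemize}
Your existence step shows $\kappa(\mathfrak q)=\kappa(\mathfrak p)$ for every $\mathfrak q$ over $\mathfrak p$. Combined with uniqueness, this also gives injectivity of $\operatorname{Spec}(S)\to\operatorname{Spec}(R)$, so you have in effect verified criterion (1) as well. The remark about writing $S=R[X]/I$ is never used and can be deleted.

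If you want to avoid lying over, existence can also be checked by hand, in two cases.
\begin{itemize}
\item When $\varphi(b^2)\neq 0$, you have $b=b^3/b^2$ in $S_{\mathfrak p}$. Hence $S_{\mathfrak p}=R_{\mathfrak p}$, and $\varphi$ extends through $S\to S_{\mathfrak p}$.
\item When $\varphi(b^2)=0$, the rule $r+sb\mapsto\varphi(r)$ is well defined: $r+sb=0$ gives $r^2=s^2b^2$, hence $\varphi(r)=0$. It is multiplicative precisely because $\varphi(b^2)=0$.
\end{itemize}
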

We shall refer to subintegral extensions of the type appearing in \ref{elementarysub} as elementary subintegral extensions. The reason for this terminology is explained by the following proposition and the proof follows from \cite[Lemma 2.3, Lemma 2.6]{swan1980seminormality}.
\begin{prop}\label{etaelementary}
    Let $R\subset S$ be a  subintegral. Then $S$ is the filtered union of subrings which can be obtained from $R$ by a finite chain of elementarily subintegral extensions .
\end{prop}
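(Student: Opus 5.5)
Let $R\subset S$ be subintegral. The claim is that $S$ is the directed union of subrings $S_\lambda$ with $R\subset S_\lambda$, each reachable from $R$ by a chain $R=R_0\subset R_1\subset\cdots\subset R_n=S_\lambda$ in which $R_{i+1}=R_i[b_i]$ and $b_i^2,b_i^3\in R_i$. The natural choice of index set is the set of finite subsets $F\subset S$. To each $F$ we attach the subring $R[F]$, and the inclusion order makes these directed. It then suffices to show that $R[F]$ itself is reachable by such a chain.

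The key step is the single-element case: for $s\in S$, the extension $R\subset R[s]$ should be built by finitely many elementary steps. First, $R\subset R[s]$ is again subintegral. It is integral, being a subextension of an integral one. Condition (1) of \ref{subintegraleq} also passes to the intermediate ring: the maps $\operatorname{Spec}(S)\to\operatorname{Spec}(R[s])\to\operatorname{Spec}(R)$ compose to a bijection and the first map is surjective by lying over, so both are bijections. Likewise, the residue field extensions for $R\subset S$ are trivial and factor through those of $R[s]$, so the latter are trivial too.

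Now I would invoke Swan's argument \cite[Lemma 2.6]{swan1980seminormality}. For $s$ subintegral over $R$ there is an $N$ with $s^n\in R$ for all $n\ge N$ after suitably enlarging $R$ step by step. More precisely, one proceeds by induction on the degree of an integral equation, producing elements $b_1,\dots,b_k$ with $b_i^2,b_i^3\in R[b_1,\dots,b_{i-1}]$ and $s\in R[b_1,\dots,b_k]\subset R[s]$. This is exactly the content cited in \cite[Lemma 2.3, Lemma 2.6]{swan1980seminormality}, and I would take it as given.

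For general finite $F=\{s_1,\dots,s_m\}$, induct on $m$. The extension $R[s_1,\dots,s_{m-1}]\subset R[F]$ is a single-generator extension. It is subintegral by the same intermediate-ring argument applied to $R[s_1,\dots,s_{m-1}]\subset S$, so the single-element case gives a chain for it. Concatenating this with the chain for $R[s_1,\dots,s_{m-1}]$ gives a chain for $R[F]$.

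The main obstacle is the single-element step, which is Swan's lemma. If I had to prove it directly, I would first reduce to the local case using that $s$ is subintegral over $R$. Then I would use the polynomial identity argument showing that $s^n$ lies in $R$ modulo elementary adjunctions for large $n$.
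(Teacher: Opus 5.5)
Your reductions are correct. The lying-over argument shows that for any intermediate ring $R'$ both $R\subset R'$ and $R'\subset S$ are subintegral, and concatenating chains handles finite $F$. But these reductions carry no weight. The single-generator statement you defer is equivalent to the proposition itself: if $R[s]$ is a filtered union of chain subrings, then $s$ lies in one of them, which is then all of $R[s]$. So, like the paper, you rely on Swan for the entire content.

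The problem is your description of that content, which would not survive as a proof. The claim that large powers of $s$ lie in $R$ is false without modification. For the subintegral extension $\mathbb{Q}\subset\mathbb{Q}[\epsilon]/(\epsilon^2)$ and $s=1+\epsilon$, one has $s^n=1+n\epsilon\notin\mathbb{Q}$ for every $n\ge 1$. It is also unclear how an induction on the degree of an integral equation would produce the $b_i$. Most importantly, ``reduce to the local case'' cannot work for the property you want. Existence of a finite elementary chain is not a local property, and chains found in the various $R_{\mathfrak p}[s]$ do not glue to a global chain.

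The missing idea is to turn the existence of chains into an equality of rings, which is a local question.
\begin{enumerate}
\item Let $S'$ be the union of all subrings $R[b_1,\dots,b_n]\subset S$ with $b_i^2,b_i^3\in R[b_1,\dots,b_{i-1}]$. Concatenation makes this family filtered, so $S'$ is a ring.
\item $S'$ is seminormal in $S$: if $b^2,b^3\in S'$, they lie in a single chain ring, and $b$ can be appended to that chain.
\item By your own argument $S'\subset S$ is subintegral. So it suffices to show that a subintegral extension $A\subset B$ with $A$ seminormal in $B$ is an equality.
\item Given $b\in B\setminus A$, replace $B$ by the finite extension $A[b]$. Localize at a prime $\mathfrak p$ minimal over the conductor $\mathfrak c$. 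Both hypotheses localize, the conductor localizes by finiteness, and $(A[b]/A)_{\mathfrak p}\neq 0$.
\item Now $A[b]$ is local with the same residue field as $A$, and its maximal ideal $\mathfrak n$ is the radical of $\mathfrak c$ in $A[b]$. So $b=a+y$ with $a\in A$ and $y\in\mathfrak n\setminus A$, and all large powers of $y$ lie in $\mathfrak c\subset A$.
\item Let $k$ be the largest exponent with $y^k\notin A$. Then $y^k$ has square and cube in $A$ but is not in $A$, contradicting seminormality.
\end{enumerate}
Your heuristic about large powers is true only in this form. It requires localizing at a prime minimal over the conductor and translating by an element of the base. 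It is then used to reach a contradiction, not to construct the chain directly.
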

\begin{ex}
\begin{enumerate}
The following are examples of subintegral extensions.
    \item Whitney Umbrella- $R[X_1,\cdots, X_{d-1}, X_1X_d, 
\cdots ,X_{d-1}X_d, X_d^2,X_d^3] \subset R[X_1,\cdots, X_d]$. 
    \item Let $S$ be any Noetherian normal ring of finite type over a field $k=\overline{k}$ whose singular locus in $\operatorname{Spec}R$ has dimension $>0$. Let $\mathfrak{m}$ be any maximal ideal in the singular locus. Consider the subring $R\subset S$ defined by
$R = \{f \in S \mid \overline{f}\in S/\mathfrak{m}^2 \text{ is an element of k} \subset S/\mathfrak{m}^2\}$. Then $R\subset S$ is a subintegral extension with radical of conductor ideal $\mathfrak{m}$. 

\end{enumerate}
\end{ex}
For background on Grothendieck-Witt groups, we refer the reader to \cite{schlichting2017hermitian}. We write $GW^{[n]}_i(R)$ for $GW^{[n]}_i(\mathrm{sPerf}^R(\operatorname{Spec}(R)))$ in the notation of \cite[Definition 9.1]{schlichting2017hermitian}. 

We now recall the following result of C. T. C. Wall \cite[Lemma 5]{wall1973classification}. A proof may be found in \cite[Proposition 2.3.7]{calmes2026hermitian}.
\begin{prop}\label{Ltheoryiso}
  Let $R$ be a ring, complete in the $I$-adic topology for an ideal $I$ of $R$. Then the canonical
map $L^{[n]}(R)\rightarrow L^{[n]}(R/I)$ is an equivalence.
\end{prop}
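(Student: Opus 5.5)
The plan is to reduce the statement to a Hensel-type lifting argument for Poincaré objects. I would first treat the case where $I$ is nilpotent, in fact square-zero, and then pass to the $I$-adic completion. Throughout I work with the algebraic model of $L^{[n]}$: it is built from non-degenerate (shifted) forms on perfect complexes, or equivalently from forms and formations on finitely generated projective modules. It therefore suffices to prove two things. First, every Poincaré object over $R/I$ lifts to one over $R$. Second, two Poincaré objects over $R$ whose reductions are cobordant (respectively, for the relevant $L$-group, isometric up to hyperbolics) are themselves cobordant.

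\textbf{Step 1: lifting modules and forms.} Since $R$ is $I$-adically complete, $I$ lies in the Jacobson radical and idempotents lift modulo $I$. Hence every finitely generated projective $R/I$-module $\bar P$ is of the form $P/IP$ for a projective $R$-module $P$, unique up to isomorphism, and isomorphisms lift. A (quadratic, symmetric, or shifted) form on $\bar P$ lifts to a form on $P$ by choosing lifts of matrix entries, or of the quadratic refinement. The lifted form is non-degenerate, because its adjoint $P\to P^\vee$ is an isomorphism modulo $I$, and Nakayama's lemma (with $I$ in the radical) then shows it is an isomorphism. The same argument lifts lagrangians and formations, since a lagrangian is a direct summand and summands lift together with their complements. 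This gives surjectivity of $L^{[n]}(R)\to L^{[n]}(R/I)$ on homotopy groups.

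\textbf{Step 2: uniqueness via successive approximation.} Suppose $q,q'$ are non-degenerate forms on $P$ with $q\equiv q' \bmod I$. I would show $q\cong q'$ by an explicit Newton iteration. One looks for $g=1+h$ with $h\in I\operatorname{End}(P)$ and $g^*q'g=q$, and solves the linearised equation $h^*q'+q'h\equiv q-q' \pmod{I^2}$. This linear equation is solvable because the map $h\mapsto q'h$ identifies endomorphisms with bilinear forms, and one may take half of the symmetrisation in the quadratic sense, using the quadratic refinement rather than dividing by $2$. Iterating modulo $I^{k}$ for $k=2,3,\dots$ produces a compatible sequence of corrections. Completeness makes the infinite product converge to an isometry. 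The same argument applied to formations and to the lagrangians inside them gives injectivity.

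\textbf{Main obstacle.} The delicate point is Step 2 when $2$ is not invertible, and its upgrade from isometries to the spectrum-level statement for all shifts $n$. For the first issue I would work with quadratic forms in Wall's sense, as elements of $\operatorname{Hom}(P,P^\vee)/(1-\tau)$. There the linearised equation is always solvable without dividing by $2$, which is exactly why Wall's lemma holds integrally. For the second issue I would present $R=\lim_k R/I^k$. I would prove the equivalence for each square-zero step $R/I^{k+1}\to R/I^k$ by Steps 1 and 2, then check that the tower satisfies Mittag-Leffler, so that the Milnor $\lim^1$ term vanishes. The Mittag-Leffler property holds because all transition maps are surjective on the underlying sets of Poincaré objects, by the lifting of Step 1. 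Together these give that $L^{[n]}(R)\to\lim_k L^{[n]}(R/I^k)\simeq L^{[n]}(R/I)$ is an equivalence.
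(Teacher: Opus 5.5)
Your Steps 1 and 2 are essentially Wall's argument for the Lemma 5 that the paper cites; the paper gives no proof of its own. Idempotent lifting and Nakayama lift projective modules, isomorphisms and non-degenerate forms. A Newton correction $g=\prod_j(1+h_j)$, with $h_j\to 0$ $I$-adically, turns an automorphism $\equiv 1 \pmod I$ into an isometry. The linearised equation is solvable because $h^{\vee}\psi'+\psi' h\equiv(\psi'+\psi'^{t})h$ in $\operatorname{Hom}(P,P^\vee)/(1-\tau)$ and $\psi'+\psi'^t$ is invertible. Your restriction to Wall's quadratic forms is necessary, not cosmetic: $\langle 1\rangle$ and $\langle 3\rangle$ agree mod $2$ but are not isometric over $\mathbb{Z}_2$. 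In the paper's applications $1/2\in R$ anyway.

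\textbf{The closing reduction is wrong.} The Milnor sequence computes $\pi_*\lim_k L^{[n]}(R/I^k)$. It says nothing about whether $L^{[n]}(R)\to\lim_k L^{[n]}(R/I^k)$ is an equivalence. That comparison is not formal, since $L$-theory is not a priori continuous along towers of rings, and it is as hard as the proposition itself. Also, if every step of the tower is already an equivalence, Mittag-Leffler is vacuous. Simply drop the tower. Steps 1 and 2 as you wrote them already use completeness of $R$ directly, through $I\subseteq\operatorname{rad}R$ and convergence in $\operatorname{End}(P)$. The spectrum-level statement is then automatic once you have isomorphisms on $\pi_iL^{[n]}\cong W^{n-i}$ for all $i$ and $n$. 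These are the Witt groups of $\pm$-symmetric forms in even degrees and of formations in odd degrees.

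\textbf{The odd-degree part is missing real content.} A direct-summand lift of a lagrangian $\bar L$ is only isotropic modulo $I$, so ``summands lift with their complements'' does not produce a lagrangian. Use Step 2 to lift isometries instead: lift $\bar\sigma$ to any automorphism $\sigma_0$; then $\sigma_0 g$ is an isometry for the $g\equiv 1$ supplied by Step 2. Then lift lagrangians as images of standard ones.

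For injectivity you need the congruence kernel of $U(H(P))\to U(H(\bar P))$ to lie in the subgroup generated by elementary isometries and the $H(\alpha)$, $\alpha\in\operatorname{Aut}(P)$. Here is why it does. If $\sigma=\begin{pmatrix}a&b\\c&d\end{pmatrix}\equiv 1$, then $a$ is invertible. So $\sigma(P)$ is the graph of $ca^{-1}$, and it is a lagrangian. Hence $\begin{pmatrix}1&0\\ ca^{-1}&1\end{pmatrix}$ is an isometry. Then $\sigma$ factors as this matrix, times $H(a)$, times an upper unipotent isometry. In the formation picture the same fact reads: a lagrangian congruent to $P$ mod $I$ is a graph over $P$, and an elementary isometry $\equiv 1$ moves it to $P$.

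With these two repairs your plan is a complete proof along Wall's lines.
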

The following observation describes the behaviour of Grothendieck–Witt groups under nilpotent extensions. Let $R_{red}$ denote the reduced ring of $R$. 
\begin{prop}\label{reducedgw1}
    Let $R$ be a ring with $1/2\in R$. Then the induced map $GW^{[n]}_1(R)\rightarrow GW^{[n]}_1{(R_{red})}$ is an isomorphism for all $n\in \mathbb{N}$.
\end{prop}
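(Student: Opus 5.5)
We will compare the Grothendieck--Witt and algebraic $K$-theory of $R$ and $R_{red}$ through the fundamental fibre sequence
\[
K(R)_{hC_2} \longrightarrow GW^{[n]}(R) \longrightarrow L^{[n]}(R),
\]
which is available because $1/2\in R$. By naturality this gives a map of long exact sequences to the corresponding sequence for $R_{red}$. The claim then follows from the five lemma once we control the map on the $K$-theory term in degrees $0,1$ and on the $L$-theory term in degrees $1,2$.

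\textbf{Reduction to a nilpotent ideal.} Let $N$ be the nilradical, so $R_{red}=R/N$. First assume $N$ is nilpotent. The $I$-adic topology for a nilpotent ideal $I$ is discrete, so $R$ is trivially $N$-adically complete. Proposition \ref{Ltheoryiso} then gives an equivalence $L^{[n]}(R)\simeq L^{[n]}(R_{red})$, hence isomorphisms on all $L$-groups. In general $N$ is only nil, not nilpotent. We then write $R$ as the filtered colimit of its finitely generated $\mathbb{Z}[1/2]$-subalgebras $R_\alpha$. Each $R_\alpha$ is Noetherian, so its nilradical is nilpotent, and $R_{red}=\operatorname{colim}(R_\alpha)_{red}$. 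Since $GW$, $K$ and $L$ commute with filtered colimits of rings, it suffices to treat the nilpotent case.

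\textbf{$K$-theory and homotopy orbits.} For $K$-theory, $K_0(R)\to K_0(R/N)$ is an isomorphism, because idempotents and projective modules lift along nil ideals. The map $K_1(R)\to K_1(R/N)$ is surjective, since $GL_n(R)\to GL_n(R/N)$ is surjective for nil $N$, but it need not be injective: its kernel is generated by the units $1+N$. The homotopy orbit spectral sequence gives $\pi_0(K_{hC_2})=K_0(R)_{C_2}$ and an exact sequence
\[
H_2(C_2;K_0)\to K_1(R)_{C_2}\to \pi_1(K_{hC_2})\to H_1(C_2;K_0)\to 0.
\]
So $\pi_0$ agrees for $R$ and $R_{red}$, and $\pi_1$ is surjective. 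The five lemma (surjective version) already gives that $GW_1^{[n]}(R)\to GW_1^{[n]}(R_{red})$ is surjective, since $L_1$ and $L_2$ agree, $\pi_0$ is injective, and $\pi_1(K_{hC_2})$ surjects.

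\textbf{Injectivity (main obstacle).} Injectivity is the hard step, because the kernel of $K_1(R)\to K_1(R/N)$, coming from $1+N$, could a priori survive into $GW_1$. I would handle it by using that $1/2\in R$ and the relevant relative groups are uniquely $2$-divisible-like objects. The relative $K$-theory $K(R,N)$ in degree $1$ is generated by the units $1+N$. The involution acts on it, and the composite $K_1\to GW_1\to K_1$ (hyperbolic, then forgetful) is $1+\tau$. Concretely, I would show that the image of $K_1(R,N)$ in $GW_1(R)$ is killed. This uses the fact that $1+x$ has a square root $\sqrt{1+x}$ in $1+N$ by the binomial series, which terminates since $x$ is nilpotent and $1/2\in R$. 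The hyperbolic image of $u=(1+x)$ then equals that of $v\cdot\tau(v)^{-1}$-type elements. Alternatively, I would make a direct argument: any element of $GW_1(R)$ is represented by an isometry of a form, and isometries congruent to the identity mod $N$ can be written as products of elementary hyperbolic/orthogonal transvections. This uses the Cayley transform $g\mapsto (1-g)(1+g)^{-1}$, which is valid because $1+g$ is invertible, being $\equiv 2$ mod $N$ with $2$ a unit. Such elements die in $GW_1$ as commutators or elementary elements. I expect to carry out the Cayley-transform argument, since it directly shows that the kernel of $O(R)\to O(R_{red})$ lies in the elementary orthogonal group, giving injectivity.
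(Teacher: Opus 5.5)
\textbf{There is a genuine gap: injectivity is never proved, and the idea that closes it only works for even $n$.}

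Your framework matches the paper's: the fibration $K(R)_{hC_2}\to GW^{[n]}(R)\to L^{[n]}(R)$, Wall's theorem for the $L$-terms, nil-invariance of $K_0$, and the homotopy orbit spectral sequence. Your reduction to finitely generated $\mathbb{Z}[1/2]$-subalgebras is more careful than the paper. The paper applies Proposition \ref{Ltheoryiso} to the nilradical without noting that a nil ideal need not be nilpotent, so $R$ need not be $N$-adically complete. Your surjectivity argument is also correct. Your criterion, that $H(1+N)=0$ in $GW_1^{[n]}(R)$, does suffice by a diagram chase, but you only offer two unexecuted sketches for it.

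What is needed, and what the paper does, is to show that $\pi_1(K(R)_{hC_2})\to\pi_1(K(R_{red})_{hC_2})$ is an isomorphism. There is an equivariant exact sequence $1\to 1+N\to K_1(R)\to K_1(R_{red})\to 1$ (left exactness via $\det$, middle exactness since $N\subseteq\operatorname{rad}(R)$). If the involution acts on $1+N$ by $u\mapsto u^{-1}$, then $(1+N)_{C_2}=(1+N)/(1+N)^2=0$ by unique $2$-divisibility. Right exactness of coinvariants then gives $K_1(R)_{C_2}\cong K_1(R_{red})_{C_2}$. Compatibility of $d_2$ with the isomorphism on $H_2(C_2;K_0)$, together with the isomorphism on $H_1(C_2;K_0)$, gives the isomorphism on $\pi_1$, and the ordinary five lemma finishes.

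This is your route (a) made precise: $u=v^2=v\,\tau(v)^{-1}$ and $H\circ\tau=H$. The Cayley-transform route does not close the gap by itself. The congruence kernel of the orthogonal group still contains the hyperbolic classes $H(u)$ with $u\in 1+N$, and killing them needs exactly the same square-root argument. Moreover, ``every element of $GW_1^{[n]}$ is represented by an isometry'' holds only for even $n$.

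More seriously, the identification $\tau(u)=u^{-1}$ is correct only for even $n$. The involution on $K_*(R)$ comes from $P\mapsto P^*[n]$, and the shift contributes a sign $(-1)^n$. For odd $n$ the involution therefore fixes $1+N\subset K_1(R)$ pointwise, so $(1+N)_{C_2}=1+N$ and no argument of this kind can work.

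In fact the statement fails for odd $n$. Take $R=k[\epsilon]/(\epsilon^2)$ with $\operatorname{char}k\neq 2$ and use the Bott sequence $GW_1^{[0]}(R)\xrightarrow{F}K_1(R)\xrightarrow{H}GW_1^{[1]}(R)$. Isometries have determinant $\pm1$, so every class in $\operatorname{im}F=\ker H$ has determinant $\pm 1$. Hence $H(1+a\epsilon)\neq 0$ for $a\neq 0$, while its image in $GW_1^{[1]}(k)$ is $H(1)=0$.

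The paper's proof has the same blind spot: it asserts the action $[M]\mapsto[(M^t)^{-1}]$ for all $n$. What you, and it, can actually prove is the even-$n$ case, which is all that Theorem \ref{main1} uses.
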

 
\begin{proof}
 We recall that the $C_2$-action on $K(R)$ described in \cite[Section 7]{schlichting2017hermitian} induces an action on $K_0(R)$ and $K_1(R)$. The induced action on $K_0(R)$ sends the class of a finitely generated projective module $P$ to the class of its dual module $P^*$, while the induced action on $K_1(R)$ sends the class of an invertible matrix $M$ to the class of $(M^t)^{-1}$.
  We get the following commutative diagram with exact rows from \cite[Theorem 7.6]{schlichting2017hermitian}.
    \[
    \scalebox{0.8}{
    \begin{tikzcd}[ampersand replacement=\&,cramped]
	{L^{[n]}_2(R)} \& {\pi_1(K(R)_{hC_2})} \& {GW^{[n]}_1(R)} \& {L^{[n]}_1(R)} \& {\pi_0(K(R)_{hC_2})} \\
	{L^{[n]}_2(R_{red})} \& {\pi_1(K(R_{red})_{hC_2})} \& {GW^{[n]}_1(R_{red})} \& {L^{[n]}_1(R_{red})} \& {\pi_0(K(R_{red})_{hC_2})}
	\arrow[from=1-1, to=1-2]
	\arrow["\simeq", from=1-1, to=2-1]
	\arrow[from=1-2, to=1-3]
	\arrow["\simeq", from=1-2, to=2-2]
	\arrow[from=1-3, to=1-4]
	\arrow[from=1-3, to=2-3]
	\arrow[from=1-4, to=1-5]
	\arrow["\simeq", from=1-4, to=2-4]
	\arrow["\simeq", from=1-5, to=2-5]
	\arrow[from=2-1, to=2-2]
	\arrow[from=2-2, to=2-3]
	\arrow[from=2-3, to=2-4]
	\arrow[from=2-4, to=2-5]
\end{tikzcd}}\]
The rightmost vertical arrow is an isomorphism because $R\rightarrow R_{red}$ gives bijection between isomorphism classes of finitely generated projective $R$-module and finitely generated projective $R_{red}$-modules preserving the $C_2$-action. The leftmost vertical arrow and the second vertical arrow from the right are isomorphisms by  \ref{Ltheoryiso}. Therefore, it remains to show that the second vertical arrow from the left is an isomorphism. Once this is established, the theorem follows from the five lemma.
We now prove this claim.

Consider the homotopy orbit spectral sequence
$$E^2_{p,q} = H_p(C_2; K_q(R)) \implies \pi_{p+q}(K(R)_{hC_2})
$$ 
from \cite[Proposition B.2]{schlichting2017hermitian}. 
By naturality of the homotopy orbit spectral sequence with respect to the map of $C_2$-spectra 
$K(R)\longrightarrow K(R_{\mathrm{red}})$, 
we obtain a commutative diagram
\[\begin{tikzcd}[ampersand replacement=\&,cramped]
	{H_2(C_2, K_0(R))} \& {H_0(C_2, K_1(R))} \\
	{H_2(C_2, K_0(R_{red}))} \& {H_0(C_2, K_1(R_{red}))}
	\arrow["{d_2}", from=1-1, to=1-2]
	\arrow["\simeq", from=1-1, to=2-1]
	\arrow["\gamma", from=1-2, to=2-2]
	\arrow["{d_2}", from=2-1, to=2-2]
\end{tikzcd}\]
Since the canonical map $K_0(R) \rightarrow K_0(R_{\mathrm{red}})$ is an isomorphism, the left vertical map is an isomorphism.
Let $I$ be the nilradical of $R$. We note that $1+I$ is uniquely $2$-divisible group.  In fact, for $x\in I$ we have $(1+x)^{1/2}=1+\frac{1}{2}x-\frac{1}{8}x^2+\cdots$.
Consider the exact sequence of abelian groups:
\[
1 \longrightarrow 1+I \longrightarrow K_1(R) \longrightarrow K_1(R_{\text{red}}) \longrightarrow 1.
\]
The long exact sequence of group homology gives the following exact sequence.
\[\begin{tikzcd}[ampersand replacement=\&,cramped]
	{} \& {H_0(C_2,1+I)} \& {H_0(C_2,K_1(R))} \& {H_0(C_2,K_1(R_{red}))} \& 0
	\arrow[from=1-2, to=1-3]
	\arrow["\gamma", from=1-3, to=1-4]
	\arrow[from=1-4, to=1-5]
\end{tikzcd}\]
As $1+I$ is $2$-divisible and $2$ is invertible in $R$ it follows that $H_0(C_2, 1+I)=0$ and $\gamma$ is an isomorphism. Therefore, the induced map
$\alpha: H_0(C_2,K_1(R))/\operatorname{im}(d_2)
\rightarrow
H_0(C_2,K_1(R_{red}))/\operatorname{im}(d_2)$
is an isomorphism. Now consider the following commutative diagram with exact rows.
\[\begin{tikzcd}[ampersand replacement=\&,cramped]
	0 \& {H_0(C_2, K_1(R))/\text{im}(d_2)} \& {\pi_1 (K(R)_{hC_2})} \& { H_1(C_2,K_0(R))} \& 0 \\
	0 \& {H_0(C_2, K_1(R_{red}))/\text{im}(d_2)} \& {\pi_1 (K(R_{red})_{hC_2})} \& { H_1(C_2,K_0(R_{red}))} \& 0
	\arrow[from=1-1, to=1-2]
	\arrow[from=1-2, to=1-3]
	\arrow["\alpha", from=1-2, to=2-2]
	\arrow[from=1-3, to=1-4]
	\arrow[from=1-3, to=2-3]
	\arrow[from=1-4, to=1-5]
	\arrow["\simeq", from=1-4, to=2-4]
	\arrow[from=2-1, to=2-2]
	\arrow[from=2-2, to=2-3]
	\arrow[from=2-3, to=2-4]
	\arrow[from=2-4, to=2-5]
\end{tikzcd}\]
Since the canonical map $K_0(R) \rightarrow K_0(R_{\mathrm{red}})$ is an isomorphism, the right vertical arrow is an isomorphism. Consequently, the middle arrow is an isomorphism which proves the proposition.
\end{proof}
We improve \ref{reducedgw1} when the ring is the truncated polynomial ring. We shall need the following result of Weibel \cite[Consequence 1.4]{weibel2006mayer}.
\begin{prop}\label{weibelrelative}
    Let $I$ be a nilpotent ideal in a $\mathbb{Z}[\frac{1}{p}]$-algebra $R$. Then $K_*(R,I)$ is a $\mathbb{Z}[\frac{1}{p}]$-module.
\end{prop}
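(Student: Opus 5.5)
This is Weibel's result, quoted from \cite[Consequence 1.4]{weibel2006mayer}. If we wanted to reprove it rather than cite it, we would reduce to the case of a square-zero ideal and then use multiplication by elements of the form $1+x$ together with a transfer or scaling argument.

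\textbf{Step 1: reduction to square zero.} We would first pass to the case $I^2=0$. Suppose $I^N=0$ and consider the filtration $R \to R/I^{N-1} \to \cdots \to R/I$. The relative groups sit in long exact sequences of triples,
\[
\cdots \to K_*(R,I^{k}) \to K_*(R,I) \to K_*(R/I^{k},I/I^{k}) \to \cdots .
\]
The class of $\mathbb{Z}[\tfrac1p]$-modules is closed under extensions, kernels and cokernels inside abelian groups. A five-lemma style induction on $N$ therefore reduces the statement to the case of a square-zero ideal.

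\textbf{Step 2: the square-zero case.} We need to show that multiplication by $p$ is bijective on $K_*(R,I)$ when $I^2=0$. The plan is to exploit the scaling action of $R$ on the ideal. For a unit $u$ of $\mathbb{Z}[\tfrac1p]$, consider the automorphism of $R\ltimes$-type data induced by $x\mapsto ux$ on $I$. This is an honest ring endomorphism when $R=R/I\ltimes I$, but not in general, and that is a subtlety.

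To handle this, we would use Goodwillie's theorem: rationally, relative $K$-theory of a nilpotent ideal agrees with relative cyclic homology. The relevant claim, however, is integral, namely that the groups are $p$-local-free of $p$-torsion and $p$-divisible. For that we would instead use Weibel's original approach via the $\lambda$-operations or Witt vector action. The relative groups $K_*(R,I)$ are modules over the big Witt vectors $W(R)$, or at least over $1+tR[[t]]$ acting through the Frobenius/Verschiebung structure. Since $p\in R^\times$, the element $p$ is a unit in $W(\mathbb{Z}[\tfrac1p])$ acting compatibly, and so $p$ is invertible on $K_*(R,I)$.

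\textbf{Main obstacle.} The hard step is establishing the module structure over a ring in which $p$ is invertible, such as the Witt vectors or $\mathbb{Z}[\tfrac1p]$ itself, on relative $K$-groups of a nilpotent ideal. This is genuinely nontrivial; it is Weibel's input via the Mayer–Vietoris/excision machinery. Accordingly, in the paper we simply invoke \cite[Consequence 1.4]{weibel2006mayer} rather than reproduce it.
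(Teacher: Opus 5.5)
Your bottom line, invoking \cite[Consequence 1.4]{weibel2006mayer}, is exactly what the paper does; the paper gives no argument of its own for this statement. The reproof you sketch, however, has a genuine gap at the essential step.

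Step 1 is fine. The sequences for $I^k\subset I\subset R$ with $k=\lceil N/2\rceil$, together with the five lemma applied to multiplication by $p$, do reduce the problem to $I^2=0$.

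Step 2, however, asserts the conclusion rather than proving it. Witt-vector module structures are available on $NK_*$. They are therefore available for graded rings, because the ring map $B\to B[t]$, $b\mapsto bt^{\deg b}$, exhibits $K_*(B,B_+)$ as a retract of $NK_*(B)$. That covers split extensions $R/I\ltimes I$. But a square-zero extension need not split, and you give no mechanism producing a $W$-action, or an action of any ring in which $p$ is a unit, on $K_*(R,I)$ in general. Goodwillie's theorem is only rational and cannot detect $p$-torsion or $p$-divisibility, so it does not fill the hole either.

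If you want a self-contained proof, the route I would use is group-homological, and it makes Step 1 unnecessary.
\begin{enumerate}
\item Since $I$ is nilpotent, $GL(R)\to GL(R/I)$ is surjective with kernel $GL(R,I)=1+M(I)$.
\item The subgroups $1+M(I^k)$ form a finite central series of $GL(R,I)$. Its quotients $M(I^k/I^{k+1})$ are $R$-modules, hence uniquely $p$-divisible.
\item A uniquely $p$-divisible abelian group has trivial reduced mod-$p$ homology. Inducting along the series with the Hochschild--Serre spectral sequence gives $\tilde H_*(GL(R,I);\mathbb{Z}/p)=0$.
\item The same spectral sequence for $GL(R,I)\to GL(R)\to GL(R/I)$ then gives $H_*(GL(R);\mathbb{Z}/p)\cong H_*(GL(R/I);\mathbb{Z}/p)$.
\item The spaces $BGL(-)^+$ are $H$-spaces, so this becomes $K_*(R;\mathbb{Z}/p)\cong K_*(R/I;\mathbb{Z}/p)$. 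Together with $K_0(R)\cong K_0(R/I)$ and surjectivity on $K_1$, this yields $K_*(R,I;\mathbb{Z}/p)=0$.
\item The universal coefficient sequence then shows that multiplication by $p$ is bijective on $K_*(R,I)$.
\end{enumerate}
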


    We note that the map $\epsilon:\frac{R[X]}{(X^n)}\rightarrow R$ given by sending $X$ to $0$ induce surjection $K_*(\epsilon):K_*(\frac{R[X]}{(X^n)})\rightarrow K_*(R)$. Because we have inclusion $j: R\hookrightarrow \frac{R[X]}{(X^n)}$ and $\epsilon \circ j=id_R$.
    \begin{prop}
    Let $R$ be a ring with $1/2\in R$. Then the induced map $GW^{[n]}_i(\frac{R[X]}{(X^n)})\rightarrow GW^{[n]}_i(R)$ is an isomorphism for all $i$ and $n$.
\end{prop}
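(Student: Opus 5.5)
The plan is to run the same comparison as in the proof of \ref{reducedgw1}, but now at the level of spectra rather than a single homotopy group. Write $A=\frac{R[X]}{(X^n)}$ and $I=(X)\subset A$, so that $I$ is nilpotent and $A/I=R$. By \cite[Theorem 7.6]{schlichting2017hermitian} there is a natural fibre sequence
\[
K(A)_{hC_2}\longrightarrow GW^{[n]}(A)\longrightarrow L^{[n]}(A),
\]
and likewise for $R$, functorial in $\epsilon\colon A\to R$. Since $A$ is $I$-adically complete (indeed $I^n=0$), \ref{Ltheoryiso} shows that $L^{[n]}(A)\to L^{[n]}(R)$ is an equivalence. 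By the five lemma on the long exact sequences of homotopy groups, it therefore suffices to show that
\[
K(A)_{hC_2}\to K(R)_{hC_2}
\]
induces isomorphisms on all homotopy groups.

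For the $K$-theory step we use the splitting noted just before the statement. Because $\epsilon\circ j=\mathrm{id}_R$, and both $j$ and $\epsilon$ commute with the duality (they are ring maps between commutative rings with trivial involution, so they preserve $P\mapsto P^*$), the fibre sequence of $C_2$-spectra
\[
K(A,I)\to K(A)\to K(R)
\]
splits $C_2$-equivariantly. Homotopy orbits preserve cofibre sequences, so we get a split cofibre sequence
\[
K(A,I)_{hC_2}\to K(A)_{hC_2}\to K(R)_{hC_2}.
\]
The problem thus reduces to showing $\pi_*(K(A,I)_{hC_2})=0$. Here we would also check that $K(A,I)$ is connective, which holds since $K_0(A)\to K_0(R)$ is an isomorphism and $K_1(A)\to K_1(R)$ is onto. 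This connectivity is needed so that the spectral sequence below converges.

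To get the vanishing, apply \ref{weibelrelative} with $p=2$: since $1/2\in R\subset A$, every $K_q(A,I)$ is a $\mathbb{Z}[\frac12]$-module. In the homotopy orbit spectral sequence
\[
E^2_{p,q}=H_p(C_2;K_q(A,I))\Longrightarrow \pi_{p+q}(K(A,I)_{hC_2})
\]
of \cite[Proposition B.2]{schlichting2017hermitian}, all groups $H_p(C_2;M)$ with $p>0$ are killed by $2$. They are also $\mathbb{Z}[\frac12]$-modules, so they vanish. Thus the spectral sequence collapses to the line
\[
\pi_q(K(A,I)_{hC_2})\cong H_0(C_2;K_q(A,I))=K_q(A,I)_{C_2}.
\]

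The main obstacle is showing that these coinvariants vanish. Uniquely $2$-divisible alone does not suffice, since it only identifies coinvariants with the $(+1)$-eigenspace of the involution. So we must show that the duality involution acts by $-1$ on $K_q(A,I)$, or find another argument. For $q=1$ this fails outright: $K_1(A,I)$ contains $1+I$, on which the involution $M\mapsto (M^t)^{-1}$ acts by inversion. Inversion is $-1$ in additive notation, so there the coinvariants vanish as in \ref{reducedgw1}. For higher $q$ we would first try a Goodwillie/cyclic homology description, $K_q(A,I)\otimes\mathbb{Q}\cong HC_{q-1}(A,I)$, and track the sign by which the transpose-inverse involution acts. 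Alternatively, we would use the $\mathbb{G}_m$-grading $X\mapsto tX$ to show that the involution acts as $-1$ on the relative groups. If this sign computation cannot be made to work, we fall back to proving the statement for $i\le 1$, via \ref{reducedgw1} and the $K_0$ argument, and isolating the general case as requiring this identification.
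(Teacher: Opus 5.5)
\textbf{There is a genuine gap, and it cannot be filled.} Your reduction is the paper's own route: Schlichting's fibre sequence $K_{hC_2}\to GW^{[n]}\to L^{[n]}$, Wall's theorem \ref{Ltheoryiso} for $L$, and Weibel's theorem \ref{weibelrelative}. As far as it goes it is sound. With $A=R[X]/(X^n)$, $I=(X)$ and the equivariant splitting coming from $\epsilon\circ j=\mathrm{id}$, it gives
\[
GW^{[n]}_i(A)\cong GW^{[n]}_i(R)\oplus K_i(A,I)_{C_2}.
\]
Here, for the duality shifted by $[n]$, $C_2$ acts on $\pi_*K$ by $(-1)^n$ times the action induced by $P\mapsto P^*$, $M\mapsto (M^t)^{-1}$. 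So the proposition is equivalent to the vanishing of these coinvariants. As you rightly say, unique $2$-divisibility only identifies them with the $(+1)$-eigenspace. This is exactly the step the paper's proof passes over with ``as in the proof of \ref{reducedgw1} using \ref{weibelrelative}''. Your proposal leaves it open, so it is not a proof.

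The step cannot be completed because the $(+1)$-eigenspace is in general nonzero, and the proposition is false as stated.

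\emph{Even shifts.} The involution sends a Steinberg symbol $\{u,v\}\in K_2$ to $\{u^{-1},v^{-1}\}=\{u,v\}$. For a field $k$ of characteristic $\neq 2$, van der Kallen's theorem gives $K_2(k[X]/(X^2),(X))\cong\Omega^1_{k}$ via $\{1+aX,b\}\mapsto a\,db/b$. This group is generated by such symbols, so it is fixed pointwise by the involution, and it is nonzero for $k=\mathbb{C}$ or $k=\mathbb{Q}(t)$. Hence $GW^{[0]}_2$ and $GW^{[2]}_2$ of $k[X]/(X^2)$ are not isomorphic to those of $k$. The Goodwillie/cyclic-homology route you propose would confirm this rather than rescue it: relative $K$-groups of truncated polynomial rings have pieces of even Adams weight, on which the canonical involution acts by $+1$.

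\emph{Odd shifts.} Here it already fails at $i=1$. Take Schlichting's Bott sequence $GW^{[0]}_1(A)\xrightarrow{F}K_1(A)\xrightarrow{H}GW^{[1]}_1(A)$ with $A=k[X]/(X^2)$. The image of $F$ consists of determinants of orthogonal matrices, so it lies in $\{\pm 1\}$. Hence $H$ embeds $1+kX$ into $GW^{[1]}_1(A)$, while its image dies in $GW^{[1]}_1(k)$. The same argument with $GW^{[2]}\to K\to GW^{[3]}$ and $X^3$ handles the literal reading of the statement, in which the shift and the truncation share the same $n$.

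\emph{What survives.} Your argument does prove the case $i\le 0$ for all shifts, and the case $i=1$ for even shifts. Your fallback for $i=1$ must exclude odd shifts, since Proposition \ref{reducedgw1}, on which it rests, makes the same sign error there.
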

\begin{proof}
   Consider the following commutative diagram from \cite[Theorem 7.6]{schlichting2017hermitian}.
   \[\scalebox{0.8}{\begin{tikzcd}[ampersand replacement=\&,cramped]
	{L_{i+1}^{[n]}(\frac{R[X]}{(X^n)})} \& {\pi_{i+1}(K(\frac{R[X]}{(X^n)})_{hC_2})} \& {GW_{i+1}^{[n]}(\frac{R[X]}{(X^n)})} \& {L_{i}^{[n]}(\frac{R[X]}{(X^n)})} \& {\pi_{i}(K(R)_{hC_2})} \\
	{L_{i+1}^{[n]}(R)} \& {\pi_{i+1}(K(R)_{hC_2})} \& {GW_{i+1}^{[n]}(R)} \& {L_{i}^{[n]}(R)} \& {\pi_{i}(K(R)_{hC_2})}
	\arrow[from=1-1, to=1-2]
	\arrow["\simeq", from=1-1, to=2-1]
	\arrow[from=1-2, to=1-3]
	\arrow[from=1-2, to=2-2]
	\arrow[from=1-3, to=1-4]
	\arrow[from=1-3, to=2-3]
	\arrow[from=1-4, to=1-5]
	\arrow["\simeq", from=1-4, to=2-4]
	\arrow[from=1-5, to=2-5]
	\arrow[from=2-1, to=2-2]
	\arrow[from=2-2, to=2-3]
	\arrow[from=2-3, to=2-4]
	\arrow[from=2-4, to=2-5]
\end{tikzcd}}\]
As in the proof of \ref{reducedgw1} using \ref{weibelrelative} one can show that the rightmost vertical arrow and the second vertical arrow from the right are isomorphisms. The statement follows from five lemma.
\end{proof}
\begin{thm}\label{main1}
      Let $R\subset S$ be a finite subintegral extension. Let $1/2\in R$. Then the induced natural map $GW^{[n]}_0(R)\xrightarrow[]{}GW^{[n]}_0(S)$ is an isomorphism for $n=0,2$.
\end{thm}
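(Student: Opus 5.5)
The plan is to reduce first to an elementary subintegral extension, and then to compare the two sides through the fundamental exact sequence of \cite[Theorem 7.6]{schlichting2017hermitian} in low degrees.

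\emph{Reduction.} Since $R\subset S$ is finite, $S$ is finitely generated as an $R$-algebra. By \ref{etaelementary}, $S$ is a filtered union of subrings, each reached from $R$ by a finite chain of elementary subintegral extensions. A finitely generated algebra is contained in one of these subrings, so $S$ itself is obtained from $R$ by a finite chain $R=R_0\subset R_1\subset\cdots\subset R_m=S$ with $R_{j+1}=R_j[b_j]$ and $b_j^2,b_j^3\in R_j$. Each step is again finite and subintegral, and $1/2\in R_j$. It therefore suffices to treat $S=R[b]$ with $b^2,b^3\in R$.

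\emph{Comparison of $K$-theoretic input.} For such an elementary extension, the conductor $\mathfrak c=\{r\in R: rS\subset R\}$ contains $b^2$ and $b^3$. Hence $S/\mathfrak c S$ is generated over $R/\mathfrak c$ by an element whose square lies in $\mathfrak c S$, so $R/\mathfrak c\subset S/\mathfrak c$ is an extension by a square-zero ideal. I would then use Milnor's square
\[
\begin{tikzcd}[ampersand replacement=\&,cramped]
R \arrow[r]\arrow[d] \& S\arrow[d]\\
R/\mathfrak c\arrow[r] \& S/\mathfrak c
\end{tikzcd}
\]
together with the established facts for subintegral extensions: $\operatorname{Pic}(R)\cong\operatorname{Pic}(S)$ and $K_0(R)\cong K_0(S)$ (Swan, Ischebeck). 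Since $S$ is a square-zero thickening of $R$ modulo $\mathfrak c$, the reduced rings agree mod nilpotents. Combining Milnor patching with \ref{reducedgw1} and \ref{Ltheoryiso} applied to the nilpotent extension $R/\mathfrak c\to S/\mathfrak c$ (both having the same reduced ring), the aim is to show two things:
\begin{itemize}
\item $\pi_0(K(-)_{hC_2})=H_0(C_2;K_0(-))$ agrees for $R$ and $S$;
\item $\pi_1(K(-)_{hC_2})$ maps surjectively.
\end{itemize}

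\emph{Five lemma.} Apply \cite[Theorem 7.6]{schlichting2017hermitian} to obtain the rows
\[
\pi_0(K_{hC_2})\to GW_0^{[n]}\to L_0^{[n]}\to \pi_{-1}(K_{hC_2})
\]
for $R$ and for $S$, with the preceding term involving $L_1^{[n]}$. For $n=0,2$, $L^{[n]}_0$ is the classical Witt group of symmetric, respectively symplectic, forms; in the symplectic case it is trivial modulo hyperbolics. So I would argue directly that the Witt group $W(R)\to W(S)$ is an isomorphism.
\begin{itemize}
\item \emph{Surjectivity.} Given a nondegenerate form on a projective $S$-module $P$, write $P=P_0\otimes_R S$ using $K_0$/Pic bijectivity for subintegral extensions; one can even show that isomorphism classes of projective modules agree. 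Then lift the form: modulo $\mathfrak c$ it lives over a square-zero extension, where forms lift since $1/2$ is invertible. Patch via Milnor's square.
\item \emph{Injectivity.} Argue in the same way for isometries: use that $O(P)$ for $R/\mathfrak c$ surjects onto that for $S/\mathfrak c$ modulo the nilpotent kernel (by Hensel-type square roots, as in the $2$-divisibility of $1+I$ in \ref{reducedgw1}).
\end{itemize}
The five lemma, together with the negative $K$-group terms (these vanish or agree, since $K_{-1}$ is also invariant under subintegral extensions), then gives the isomorphism on $GW_0^{[n]}$.

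\emph{Main obstacle.} The hardest step is the patching of forms and isometries across the Milnor square: showing that the double coset $O(S/\mathfrak c)\backslash\ldots$ obstruction vanishes. I would first try to prove that $O_{R/\mathfrak c}(\bar P)\to O_{S/\mathfrak c}(\bar P)$ is surjective up to elements lifting to $O_S(P)$. For this I would use that the kernel of $O(S/\mathfrak c)\to O((S/\mathfrak c)_{red})$ is uniquely $2$-divisible, and that $(S/\mathfrak c)_{red}=(R/\mathfrak c)_{red}$ by subintegrality. The odd shifts must fail, since there the relevant $L_1$ or unit terms (for instance units modulo squares) are not invariant.
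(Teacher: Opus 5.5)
Your reduction to $S=R[b]$ with $b^2,b^3\in R$ is fine and agrees with the paper. What follows, however, rests on a false input. Subintegral extensions are exactly where $\operatorname{Pic}$ and $K_0$ fail to be invariant. For $R=\mathbb{C}[X^2,X^3]\subset\mathbb{C}[X]=S$ one has $\operatorname{Pic}(R)\cong(\mathbb{C},+)$ and $\operatorname{Pic}(S)=0$, so $K_0(R)\to K_0(S)$ is surjective but not injective; the paper uses precisely this to show that odd shifts fail. Milnor patching only gives that every projective $S$-module is extended from $R$, so ``isomorphism classes of projective modules agree'' is false.

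This undermines your first bullet. In the cusp example it survives for even $n$ only because the kernel $\mathbb{C}$ is $2$-divisible and duality acts on it by $-1$, so it dies in $C_2$-coinvariants; you give no such argument. Moreover, the five lemma at $GW_0^{[n]}$ needs $L_1^{[n]}(R)\to L_1^{[n]}(S)$ surjective and injectivity on the $\pi_{-1}$ term, not a statement about $\pi_1(K_{hC_2})$. You never compare the $L$-theory of $R$ and $S$: Wall's result \ref{Ltheoryiso} concerns quotients by nilpotent or complete ideals, so here it compares $R/\mathfrak c$ and $S/\mathfrak c$ via their common reduced ring but says nothing about $R\to S$. (Also, $L_0^{[2]}$ is not trivial in general: symplectic spaces over local rings are hyperbolic, but this fails over general rings.) Finally, the injectivity step, i.e.\ the double coset problem for isometries over $S/\mathfrak c$, is left open. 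Unique $2$-divisibility of $\ker\bigl(O(S/\mathfrak c)\to O((S/\mathfrak c)_{red})\bigr)$ does not by itself factor an isometry over $S/\mathfrak c$ into one coming from $S$ and one coming from $R/\mathfrak c$.

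The missing idea is to drop the sequence $K_{hC_2}\to GW\to L$ for $R$ and $S$ altogether. Instead, apply the Mayer--Vietoris sequence for $GW_1^{[n]}$, $GW_0^{[n]}$ (as in \cite[Theorem XII.8.3]{zbMATH03278303}) directly to the Milnor square formed by $R$, $S$, $R/\mathfrak c$, $S/\mathfrak c$. Your patching ingredients then suffice once stabilized. Since $S/\mathfrak c=R/\mathfrak c\oplus\bar b\,R/\mathfrak c$ with $\bar b^2=0$, the map $j_*\colon GW_0^{[n]}(R/\mathfrak c)\to GW_0^{[n]}(S/\mathfrak c)$ is split injective via the retraction $\epsilon$ and surjective by \ref{reduced}. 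The stable form of your double coset problem is exactly surjectivity of $GW_1^{[n]}(S)\oplus GW_1^{[n]}(R/\mathfrak c)\to GW_1^{[n]}(S/\mathfrak c)$. This follows from \ref{reducedgw1}, because $GW_1^{[n]}(R/\mathfrak c)$ and $GW_1^{[n]}(S/\mathfrak c)$ are both isomorphic to $GW_1^{[n]}$ of the common reduced ring. One obtains a short exact sequence $0\to GW_0^{[n]}(R)\to GW_0^{[n]}(S)\oplus GW_0^{[n]}(R/\mathfrak c)\to GW_0^{[n]}(S/\mathfrak c)\to 0$. Comparing it with the analogous sequence for the trivial square $S$, $S$, $S/\mathfrak c$, $S/\mathfrak c$ (middle map $\mathrm{id}\oplus j_*$, an isomorphism) gives $GW_0^{[n]}(R)\cong GW_0^{[n]}(S)$. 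No information is needed about $K_0(R)\to K_0(S)$ or about the $L$-theory of $R$ and $S$.
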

  \begin{proof}
  By \ref{etaelementary}, we may assume that the extension $R\subset S$ is elementary subintegral extension. 
Let $C$ be the conductor ideal of the extension $R\subset S=R[b]$ with $b^2, b^3 \in R$. Then we have the following cartesian diagram of rings where $\epsilon: S/C=R/C\oplus \overline{b}R/C\twoheadrightarrow R/C$ denote the projection onto the first component. 
   \[\begin{tikzcd}[ampersand replacement=\&]\label{cartesian}
	R \& S \\
	{R/C} \& {S/C} \\
	{(R/C)_{red}} \& {(S/C)_{red}}
	\arrow["i", hook, from=1-1, to=1-2]
	\arrow["{\pi_1}"', from=1-1, to=2-1]
	\arrow["{\pi_2}", from=1-2, to=2-2]
	\arrow["j"', hook, from=2-1, to=2-2]
	\arrow[from=2-1, to=3-1]
	\arrow["\epsilon", shift left=3, curve={height=-6pt}, from=2-2, to=2-1]
	\arrow[from=2-2, to=3-2]
	\arrow[equals, from=3-1, to=3-2]
\end{tikzcd}\]
Consider the Mayer-Vietoris sequence \cite[Theorem. XII.8.3]{zbMATH03278303} of the above cartesian square.
\[\begin{tikzcd}[ampersand replacement=\&,row sep=small]
	{GW_1^{[n]}(S)\oplus GW_1^{[n]}(R/C)} \& {GW_1^{[n]}(S/C)} \& {GW_0^{[n]}(R)} \\
	{GW_0^{[n]}(S)\oplus GW_0^{[n]}(R/C)} \& {GW_0^{[n]}(S/C)} \& 0
	\arrow["\varphi", from=1-1, to=1-2]
	\arrow[from=1-2, to=1-3]
	\arrow[from=1-3, to=2-1]
	\arrow["\psi", from=2-1, to=2-2]
	\arrow[from=2-2, to=2-3]
\end{tikzcd}\]
The map $\varphi$ is surjective by \ref{reducedgw1} and $\psi$ is surjective because $(R/C)_{red}=(S/C)_{red}$. Therefore we have the following map of short exact sequences.
\[
\begin{tikzcd}[column sep=small, ampersand replacement=\&]
0 \& {GW_0^{[n]}(R)} 
\& {\begin{array}{c}GW_0^{[n]}(S)\\ \oplus \\ GW_0^{[n]}(R/C)\end{array}}
\& {GW_0^{[n]}(S/C)} \& 0 \\
0 \& {\begin{array}{c}
GW_0^{[n]}(S)\\
\simeq GW_0^{[n]}(S\times_{S/C}S/C)
\end{array}}
\& {\begin{array}{c}GW_0^{[n]}(S) \\ \oplus \\ GW_0^{[n]}(S/C)\end{array}} 
\& {GW_0^{[n]}(S/C)} \& 0
\arrow[from=1-1, to=1-2]
\arrow[from=1-2, to=1-3]
\arrow["i", from=1-2, to=2-2]
\arrow[from=1-3, to=1-4]
\arrow["id_S\oplus j", from=1-3, to=2-3]
\arrow[from=1-4, to=1-5]
\arrow[equals, from=1-4, to=2-4]
\arrow[from=2-1, to=2-2]
\arrow[from=2-2, to=2-3]
\arrow[from=2-3, to=2-4]
\arrow[from=2-4, to=2-5]
\end{tikzcd}
\]
The bottom exact sequence is from the Mayer-Vietoris sequence of the below cartesian square.
\[\begin{tikzcd}[ampersand replacement=\&]
	S \& {S} \\
	S/C \& {S/C}
	\arrow[from=1-1, to=1-2]
	\arrow[from=1-1, to=2-1]
	\arrow[from=1-2, to=2-2]
	\arrow[from=2-1, to=2-2]
\end{tikzcd}\] 
It follows that the natural map $GW_0^{[n]}(R)\rightarrow GW_0^{[n]}(S)$ is an isomorphism.
\end{proof}
For $n=2$, the above result recovers \cite[Corollary 3.20]{das2014invariance} and extends it to rings of arbitrary dimension.
\noindent 
We give an example where \ref{main1} doesn't hold for odd shifts.
\begin{ex}
    Consider the subintegral extension $R=\mathbb{C}[X^2,X^3]\subset \mathbb{C}[X]=S$. We know that $GW^{[1]}_0(S)=GW^{[1]}_0(\mathbb{C})=0$ and $GW^{[3]}_0(\mathbb{C})=\mathbb{Z}/2\mathbb{Z}$. Let $n=0$ or $2$. We get the following commutative diagram from \cite[Theorem 6.1]{schlichting2017hermitian}. 
    \[\begin{tikzcd}[ampersand replacement=\&,cramped]
	{GW^{[n]}_0(R)} \& {K_0(R)} \& {GW^{[n+1]}_0(R)} \\
	{GW^{[n]}_0(S)} \& {K_0(S)} \& {GW^{[n+1]}_0(S)}
	\arrow[from=1-1, to=1-2]
	\arrow["\simeq", from=1-1, to=2-1]
	\arrow[from=1-2, to=1-3]
	\arrow["i", two heads, from=1-2, to=2-2]
	\arrow[from=1-3, to=2-3]
	\arrow[from=2-1, to=2-2]
	\arrow[from=2-2, to=2-3]
\end{tikzcd}\]

    By \cite[Corollary 1]{ischebeck1989subintegral}, kernel of $K_0(R)\rightarrow K_0(S)$ is in bijection with the kernel of the map $\operatorname{Pic}(R)\rightarrow \operatorname{Pic(S)}=1$. As $\operatorname{Pic(R)}=(\mathbb{C},+)$ and $\operatorname{Pic(S)}= 1$, it follows that the induced map $GW^{[n+1]}_0(R)\rightarrow GW^{[n+1]}_0(S)$ is not an isomorphism.
\end{ex}

\section{Cancellation of Quadratic spaces over geometric subrings}\label{section2}
We briefly recall some basic notions concerning quadratic spaces. A detailed exposition may be found in \cite[Chapter 7, p.~233]{lam2006serre}. Throughout, let $R$ be a ring in which $2$ is invertible.

A \emph{quadratic space} over $R$ is a pair $(Q,d_q)$ consisting of a finitely generated projective $R$-module $Q$ together with a symmetric isomorphism $d_q:Q\xrightarrow{\sim}Q^*=\operatorname{Hom}_R(Q,R),$
that is, the transpose
$d_q^t:Q\xrightarrow{\sim}Q^{**}\xrightarrow{\sim}Q^*$ coincides with $d_q$. The map $
q:Q\to R$,  $q(x)=\frac{1}{2}d_q(x)(x),$ is called the \emph{quadratic form} associated to $(Q,d_q)$ and the map $B_q: Q\times Q\rightarrow R$ given by $B_q(x,y)=d_q(x)(y)$ is called the \emph{associated bilinear form} of $d_q$. By abuse of notation, we often write $q$ for the quadratic space $(Q,d_q)$.
An \emph{isometry} $\varphi:q\to q'$ is an isomorphism $\varphi:Q\to Q'$ such that the diagram
\[
\begin{tikzcd}
	Q & {Q'} \\
	{Q^*} & {(Q')^*}
	\arrow["\varphi", from=1-1, to=1-2]
	\arrow["{d_q}"', from=1-1, to=2-1]
	\arrow["{d_{q'}}", from=1-2, to=2-2]
	\arrow["{\varphi^*}", from=2-2, to=2-1]
\end{tikzcd}
\]
commutes. The \emph{orthogonal sum} of quadratic spaces $q=(Q,d_q)$ and $q'=(Q',d_{q'})$ is defined by $
q\perp q'=(Q\oplus Q',\, d_q\oplus d_{q'}).$ Let $B_q$ denote the symmetric bilinear form associated to $q$. If $Q_0\subseteq Q$ is a submodule, its orthogonal complement is
$
Q_0^{\perp}=\{x\in Q\mid B_q(x,y)=0 \text{ for all } y\in Q_0\}
$. A \emph{hyperbolic space} is a quadratic space of the form
$
\ H(P)=
(P\oplus P^*,
\begin{pmatrix}
0 & 1\\
1 & 0
\end{pmatrix}
),
$
where $P$ is a projective $R$-module. We write $\mathbf h^n$ for $H(R^n)$.
We say that a quadratic space $q$ has \emph{Witt index} at least $n$ if
$
q\simeq q_0\perp  H(P)
$
for some projective $R$-module $P$ of rank $n$. We say that $q$ has \emph{hyperbolic rank} at least $n$ if
$
q\simeq q_0\perp \mathbf h^d
$
for some $d\geq n$.
A quadratic space $q$ is said to be \emph{cancellative} if, whenever
$
q\perp q'\simeq q_1\perp q',
$
it follows that $q\simeq q_1$.

The following result is a consequence of \cite[Theorem 2]{wall1970classification}.

\begin{lem}\label{reduced}
Let $I\subset R$ be a nilpotent ideal, and let $q$ and $q'$ be quadratic spaces over $R$. If $q$ and $q'$ are isometric modulo $I$, then $q$ and $q'$ are isometric over $R$.
\end{lem}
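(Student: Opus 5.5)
The plan is to lift an isometry one nilpotent step at a time, as in Wall's argument. First I reduce to the case $I^2=0$. Induction along the filtration $I\supset I^2\supset\cdots\supset I^N=0$ does this: if $q/I^{k}\simeq q'/I^{k}$, then applying the square-zero case to the ring $R/I^{k+1}$ and the ideal $I^k/I^{k+1}$ gives $q/I^{k+1}\simeq q'/I^{k+1}$. So assume $I^2=0$, and write $\bar{R}=R/I$ and $\bar{Q}=Q/IQ$.

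Next, lift the underlying module isomorphism. Finitely generated projective modules lift uniquely up to isomorphism along nilpotent ideals, and every isomorphism $\bar Q\to\bar Q'$ lifts to a module map $Q\to Q'$. That lift is an isomorphism by Nakayama, since $I$ is nilpotent. So I may assume $Q'=Q$ and that the two forms $d,d'\colon Q\to Q^*$ agree modulo $I$. Then $d'=d+\delta$ with $\delta\colon Q\to IQ^*$, and $\delta$ is symmetric because $d$ and $d'$ both are.

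Now seek an automorphism $\varphi=1+\eta$ of $Q$ with $\eta\colon Q\to IQ$ and $\varphi^*d'\varphi=d$. Since $I^2=0$, this equation reduces to
\[
\delta+\eta^*d+d\eta=0 .
\]
Take $\eta=-\tfrac12 d^{-1}\delta$; this uses $1/2\in R$. By symmetry of $d$ and $\delta$, $\eta^* d=-\tfrac12\delta$, so $\eta^*d+d\eta=-\delta$, as required. Then $\varphi$ is invertible, because $\eta$ is nilpotent, and it is the desired isometry between $q$ and $q'$. Composing these lifts through the filtration gives the lemma.

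The main obstacle is the bookkeeping in the square-zero step. One must check that $d^{-1}\delta$ really lands in $IQ$, which holds since $d^{-1}$ is $R$-linear. One must also check that all terms quadratic in $I$ vanish. An alternative is simply to cite \cite[Theorem 2]{wall1970classification}, which gives exactly this lifting for forms over $I$-adically complete rings; a ring with $I$ nilpotent is trivially $I$-adically complete.
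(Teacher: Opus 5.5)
Your proof is correct, but it follows a different route from the paper. The paper gives no argument of its own: it simply deduces the lemma from Wall's lifting theorem \cite[Theorem 2]{wall1970classification} for forms over $I$-adically complete rings, which is the alternative you mention at the end.

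You instead prove the nilpotent case directly, in three steps:
\begin{enumerate}
\item The finite filtration $I\supset I^2\supset\cdots\supset I^N=0$ reduces to the case $I^2=0$.
\item A module lift of the given isometry is an isomorphism.
\item The linearized equation $\delta+\eta^*d+d\eta=0$ is solved by $\eta=-\tfrac12 d^{-1}\delta$. All quadratic terms vanish because $I^2=0$, and $1+\eta$ is invertible because $\eta^2=0$.
\end{enumerate}

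This is essentially a finite-stage version of a successive-approximation argument. Because the filtration terminates, no convergence step is needed. Your proof also makes visible that only the nilpotency of $I$ and the standing hypothesis $1/2\in R$ of this section are used. The citation, by contrast, buys brevity and the more general statement for complete rings.

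Two minor bookkeeping remarks.
\begin{itemize}
\item Nakayama alone gives only surjectivity of the lifted module map $\sigma\colon Q\to Q'$. For injectivity, add one of the following:
\begin{itemize}
\item the surjection splits as $Q\simeq Q'\oplus K$ with $K=IK$, hence $K=0$; or
\item lift the inverse isomorphism to some $\tau\colon Q'\to Q$ and note that $\tau\sigma-1$ and $\sigma\tau-1$ have image in $IQ$ and $IQ'$ respectively, hence are nilpotent.
\end{itemize}
\item In your induction you only need the existence of an isometry at each stage, so nothing actually has to be composed.
\end{itemize}
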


 Roy introduced a class of orthogonal transformations called \emph{elementary orthogonal transformation}. We recall their definition and refer the reader to \cite{roy1968cancellation} for further details.

Let $(Q,q)$ be a quadratic space and let $P$ be a projective $R$-module. For a $R$-linear map
$\alpha\in \operatorname{Hom}_R(Q,P)$, $\beta\in \operatorname{Hom}_R(Q,P^*) $, let $\alpha^*=d_q^{-1}\circ \alpha^t$ and $\beta^*=d_q^{-1}\circ \beta^t$

define orthogonal transformations $E_\alpha$ and $E_\beta^*$ of $Q\perp \mathbb H(P)$ by
\[
\begin{aligned}
E_\alpha(z,x,f)
&=
\bigl(z-\alpha^*(f),
x+\alpha(z)-\tfrac12\alpha\alpha^*(f),
f\bigr),\\
E_\beta^*(z,x,f)
&=
\bigl(z-\beta^*(x),
x,
f+\beta(z)-\tfrac12\beta\beta^*(x)\bigr).
\end{aligned}
\]

We denote by $EO_R(Q,P)$ the subgroup of $O_R(Q\perp \mathbb H(P)),$
generated by the transformations $E_\alpha$ and $E_\beta^*$.

\begin{defn}\label{geometricsubring}
Let $R$ be a ring of dimension $d\geq 1$.  A Noetherian ring $A$ is called geometric subring of $R[T]$ if $dim(A)=d+1$, $R\subset A\subset R[T]$, and there exists a non zero divisor $s$ of $R$ such that $A_s=R_s[T].$
\end{defn}
Examples of geometric subrings include rees algebras. 
The notion of generalized dimension function is due to Plumstead. We recall the definition. Let $R$ be a ring. Let $X\subset \operatorname{Spec}(R)$. For a function $d:X\rightarrow \mathbb{N}$ we define a partial ordering $\ll$ on $X$ as follows. For primes $\mathfrak{p}$ and $\mathfrak{q}$ in $X$, define $\mathfrak{p} \ll \mathfrak{q}$ if and only if $\mathfrak{p} \subset \mathfrak{q}$ and $d(\mathfrak{p})> d(\mathfrak{q}).$ 
    \begin{defn}
        A function $d:X\rightarrow \mathbb{N}$ is generalized dimension function if for any ideal $I\subset R$, $V(I)\cap X$ has only finitely many minimal elements with respect to partial ordering $\ll.$
    \end{defn} 
    The following example is similar to \cite[Example 4]
    {plumstead1983conjectures}.
    \begin{ex}\label{gdf}
        Let $R$ be a ring of dimension $d$. Let $s\in R$ be a non zero divisor on $R$ such that $dim(R_s)\leq d-1$. Then there exists a generalized dimension function on $\operatorname{Spec}(R)$ whose values are atmost $d-1$.
    \end{ex}
\begin{thm}
    Let $R$ be a ring of dimension $d\geq 1$. Let $A$ be a geometric subring of $R[T]$. Let  $q$ be a quadratic space over $A$ of Witt index $\geq d+1$. Then $q$ is cancellative.  
\end{thm}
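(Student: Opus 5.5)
We reduce the cancellation statement to a transitivity statement for the elementary orthogonal group, in the spirit of Bass–Roy and Parimala. Write $q \simeq q_0 \perp \mathbb H(P)$ with $\operatorname{rank} P \geq d+1$. Suppose $q\perp q'\simeq q_1\perp q'$. Adding $q'' $ with $q'\perp q''$ hyperbolic, it suffices to cancel a hyperbolic space $\mathbf h^m$, and by induction a single hyperbolic plane $\mathbf h$. So assume $q\perp \mathbf h\simeq q_1\perp \mathbf h$ via an isometry $\sigma$. Let $e$ be the standard isotropic unimodular vector of $\mathbf h$. The key step is to produce $\tau\in O(q\perp\mathbf h)$ with $\tau(\sigma^{-1}(e))=e$ (after adjusting a hyperbolic pair). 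Then $\tau\sigma^{-1}$ carries the hyperbolic plane of $q_1\perp\mathbf h$ to that of $q\perp\mathbf h$, and taking orthogonal complements gives $q\simeq q_1$. Thus the theorem reduces to the following claim. \emph{$EO_A(Q_0\oplus \mathbb H(P), A)$ acts transitively on unimodular isotropic vectors of $q\perp \mathbf h$ whenever the Witt index of $q$ is at least $d+1$.}

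To prove the transitivity, first pass to $A_{red}$ by Lemma \ref{reduced}, so that we may assume $A$ is reduced. Next use the structure of geometric subrings. Let $s\in R$ be the non zero divisor with $A_s=R_s[T]$. Over $R_s[T]$, Parimala's theorem for polynomial rings applies because $\dim R_s\leq d$ and the Witt index is at least $d+1$. Hence $q_s$ is cancellative, and elementary transformations move $v_s$ to $e$. Then carry out a patching (Quillen-type local–global) argument. Lift the elementary transformation over $A_s$ to one over $A$ that agrees modulo a high power of $s$. This is possible because the generators $E_\alpha,E^*_\beta$ are given by linear data $\alpha,\beta$, which can be scaled by $s^N$. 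The result reduces the problem to a unimodular isotropic vector congruent to $e$ modulo $s^NA$.

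Next handle the $s$-adic part. Here the generalized dimension function of Example \ref{gdf} enters. On $\operatorname{Spec}(A)$, restricted to the primes relevant to the problem, it takes values at most $d$. This is so since $\dim R_s\leq d-1$ controls the locus where $s$ is inverted, and $V(s)$ has dimension $\leq d$. Plumstead's generalized-dimension version of Bass' stable range / basic element theorem, transported to the orthogonal setting as in Roy's proof, then shows the following. Given $v=(z,x,f)$ in $Q_0\oplus (P\oplus A)\oplus(P^*\oplus A)$, with the rank of the hyperbolic part at least $d+2>$ the generalized dimension, one can apply $E_\alpha$ and $E^*_\beta$ to make the $P\oplus A$-component basic. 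After that, a standard sequence of Roy's elementary moves sends $v$ to $e$.

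The main obstacle is this last step. One has to verify that Plumstead's basic element theorem holds for the non-finitely-generated-over-a-nice-base situation of $A$ with the bound $d+1$ rather than $d+2$. Equally, one has to check that the hyperbolic summand $H(P)$, with $P$ projective of rank $\geq d+1$ and not necessarily free, supplies enough room for the Roy moves. I would first try to reduce to $P$ containing a free summand of rank one using Serre splitting over the generalized dimension function, which also needs rank $>d$.
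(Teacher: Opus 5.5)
There is a genuine gap in the local--global step, and that step is where the content of the theorem lies.

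\emph{The lifting step does not work.} An element of $EO_{A_s}$ cannot in general be lifted to $EO_A$, even approximately. Replacing $\alpha$ by $s^N\alpha$ gives a different transformation $E_{s^N\alpha}$, which is congruent to the identity modulo $s^N$, not to $E_\alpha$. So moving $v_s$ to $e$ over $A_s$ gives you no vector over $A$ congruent to $e$ modulo $s^N$. Such a congruence is a statement over $A/s^NA$, i.e.\ about $V(s)$. Even after you achieve it, the remaining defect lives on $D(s)$, and the solution over $A_s$ still has to be glued in. Gluing needs the isometry over $A_s$ to be normalized along a fixed reduction, so that a Quillen-type dilation argument applies. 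An arbitrary isometry supplied by Parimala's cancellation theorem over $R_s[T]$ is of no use for this. The paper builds in this normalization as follows:
\begin{itemize}
\item it picks a nonzerodivisor $F\in TR[T]\cap A$;
\item it uses Roy's Corollary 6.4 over $A/FA$, which has dimension $\le d$, to arrange $\bar\beta(0,1)=(0,1)$;
\item it uses Harder's theorem over $S^{-1}A=S^{-1}R[T]$, with $S$ the nonzerodivisors of $R$, to get an isometry $\psi_1$ over some $A_s$ with $\bar\psi_1=\bar\beta$.
\end{itemize}

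\emph{Your bound on the generalized dimension function is unjustified.} You use $\dim R_s\le d-1$, after stating only $\dim R_s\le d$ earlier. Nothing in the definition of a geometric subring gives this, and $\operatorname{Spec}(A_s)=\operatorname{Spec}(R_s[T])$ can have dimension $d+1$. So you cannot simply read off a generalized dimension function on $\operatorname{Spec}A$ bounded by $d$.

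The missing idea is to localize at $1+sR$ and set $B=A_{1+sR}$. A prime of $R$ not meeting $1+sR$ and not containing $s$ lies in a maximal ideal containing $s$. Hence $\dim(R_{1+sR})_s\le d-1$, so $\dim B_s\le d$; also $\dim B/sB\le d$. Example \ref{gdf} then gives a generalized dimension function on $\operatorname{Spec}B$ with values $\le d$. Over $B$, Parimala's Theorem 3.1 applies with $\operatorname{rank}P\ge d+1$, so the ``main obstacle'' you flag is already covered and need not be reproved. Explicit elementary moves then produce $\psi_2$ over $B$ with $\psi_2(0,1)=(0,1)$ and $\bar\psi_2=\bar\beta$. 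Since $\operatorname{Spec}A=D(s)\cup\operatorname{Spec}B$, it is these two compatibly normalized isometries $\psi_1,\psi_2$ that are patched. Your proposal has neither the $1+sR$ localization nor the normalization modulo $F$, so as written it does not prove the theorem.
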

\begin{proof}
    We may assume $A$ is reduced using \ref{reduced}. Let $\mathcal{J}_A= TR[T]\cap A$. As $ht(\mathcal{J}_A)\geq 1$ there exists $F\in \mathcal{J}_A$ a non zero divisor on $A$. Let $''bar''$ denote modulo $F$. Since Witt index of $q$ $\geq d+1$, we have $q=q_0\perp H(P)$, where $P$ is projective $A$ module of rank 
    $\geq d+1$.  It is enough to show that if $q'\perp <u>\simeq  q\perp <u>$, where $u$ is an unit of $A$ and $q'$ a quadratic space over $A$, then $q'\simeq q$. Here $<u>$ denotes the quadratic form on $A$ given by sending $x\in A$ to $ux^2$.  Let $\beta: q'\perp <u>\xrightarrow[]{\sim}q\perp <u>$ be an isometry. Let $\beta(0,1)=(z,a)$. Since $rank(\Bar{P})\geq dim(\Bar{A})+1$, by \cite[Corollary 6.4]{roy1968cancellation}, there exists $\Bar{\eta}\in EO_{\Bar{A}}(\Bar{q_0}, H(\Bar{P}))$ such that $\Bar{\eta}(\Bar{z},\Bar{a})=(0,1)$. Let $\eta\in EO_A(q_0,H(P)) $be a lift of $\Bar{\eta}$. Replacing $\beta$ by $\beta\circ\eta$ we may assume $\Bar{\beta}(0,1)=(0,1)$. 

    Let $S$ be the set of all non zero divisors on $R$. Then $S^{-1}A=S^{-1}R[T]$, therefore by Harder's theorem \cite[Theorem 3.13, p.~246]{lam2006serre} $q\otimes_A S^{-1}A$ and $q'\otimes_A S^{-1}A$ are extended from $S^{-1}R$. We have isometries $\varphi_1:q\otimes_A S^{-1}A\xrightarrow[]{\sim}\Bar{q}\otimes_{R}S^{-1}A$ and $\varphi_2:q'\otimes_A S^{-1}A\xrightarrow[]{\sim}\Bar{q'}\otimes_{R}S^{-1}A$. Replacing $\varphi_i$ with $(\Bar{\varphi_i}^{-1}\otimes_RS^{-1}A)\circ \varphi_i$, for $i=1,2$, we may assume $\Bar{\varphi_i}=1$, for $i=1,2$. Then $\psi_1=\varphi_1^{-1}\circ \beta \circ \varphi_2$ is an isometry such that $\Bar{\psi}_1=\Bar{\beta}$. There exists $s\in S$ such that $\psi_1$ is defined over $A_s$.

  Let $S'=1+sR$ and $B=S'^{-1}A$. By \ref{gdf}, there exists an generalised dimension function $d$ on $\operatorname{Spec}(B)$ such that $d(B)\leq d$. Let  $\beta(0,1)=(Fz, Fx, Ff, 1+F\nu)$, where $z \in Q, x\in P, f \in P^*,$ and $\nu\in A$. 
  We have $i: FP\hookrightarrow P$, $(Fz,Fx,i^t(Ff), 1+F\nu)\in q_0\perp H(FP)\perp <u>$ and $FP$ a projective $A$ module of rank $\geq d+1$. Applying \cite[Theorem 3.1]{parimala1984cancellation}, there exists $\eta=E_{\alpha_1}\circ\cdots\circ E_{\alpha_n}\in EO_B(q_0\perp <u>, H(FP))$ such that $\eta(Fz,Fx,Ff,1+F\nu)=(Fz_0, Fx_0, i^t(Ff), 1+F\nu_0) $ with $x_0$ unimodular in $P$.
  Let $\widetilde{\eta}=E_{\widetilde{\alpha}_1}\circ E_{\widetilde{\alpha}_2}\circ \cdots \circ E_{\widetilde{\alpha}_n}$ where $\widetilde{\alpha}_j=i\circ \alpha_j$. 
  Then by \cite[Lemma 1.1]{parimala1984cancellation} $\widetilde{\eta}(fz,Fx,Ff,1+F\nu)=(Fz_0,Fx_0,Ff,1+F\nu_0)$ with  $x_0$ unimodular in $P$ and $\Bar{\widetilde{\eta}}=1$.
  
  Let $\beta_1^*:P=Bx_0\oplus P_0\rightarrow Q$ be a homomorphism with $\beta_1^*(x_0)=z_0$ and $\beta_1=(\beta_1^*)^t\circ d_{q_0}$. Then we have 
  $E_{\beta_1}^*(Fz_0,Fx_0,Ff, 1+F\nu_0)=(Fz_0-\beta_1^*(Fx_0),Fx_0, \beta_1(Fz_0)-1/2\beta_1\beta_1^*(Fx_0)+Ff,1+F\nu_0)$. As $\beta_1^*(Fx_0)=Fz_0$, we have $E_{\beta_1}^*(Fz_0,Fx_0,Ff, 1+F\nu_0)=(0,Fx_0, Ff_0,1+F\nu_0)$ for some $f_0\in P^*$. 
  
  Since $F^2f_0(x_0)+(1+F\nu_0)^2u=u$, $\nu_0=F\nu_1$ for some $\nu_1\in B$. Let $\beta_2^*:P=Bx_0\oplus P_0\rightarrow B$ be a homomorphism defined by $\beta_2^*(x_0)=F\nu_1$ and $\beta_2^*(P_0)=0$. Let $\beta_2=(\beta_2^*)^t\circ d_{<u>}$. Then we have $E^*_{\beta_2}(0,Fx_0,Ff_0,1+F\nu_0)=(0,Fx_0, \beta_2(1+F\nu_0)-1/2\beta_2\beta_2^*(Fx_0)+Ff_0,1+F\nu_0-\beta_2^*(Fx_0))=(0,Fx_0,Ff_1,1)$ and $\Bar{E}_{\beta_2}=1.$

  Let $\alpha:B\rightarrow P$ be homomorphism given by $\alpha(1)=-Fx_0$. As $f_1(x_0)=0$ we have $E_{\alpha_1}(0,Fx_0,Ff_1,1)=(0,0,Ff_1,1)$. Let $\beta_3:B\rightarrow P^*$ be homomorphisms $\beta_3(1)=-Ff_1$. Then $E_{\beta_3}^*(0,0,Ff_1,1)=(0,0,0,1)$. We have $\psi_2(0,1)=(E_{\beta_2}^*)^{-1}\circ E_{\beta_3}^*\circ E_{\alpha}\circ E_{\beta_2}^* \circ E_{\beta_1}^*\circ \widetilde{\eta} \circ \beta(0,1)=(0,1)$ and $\Bar{\psi}_2=\Bar{\beta}$. 
\end{proof}
The following is immediate consequence of above theorem.
\begin{cor}
    Let $A$ be as above. Let $q$ be a quadratic space over $A$ with Witt index $\Bar{q}\geq d+1$. If $q$ is stably extended from $A$, then it is extended from $A$.
\end{cor}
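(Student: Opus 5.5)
The plan is to reduce the corollary directly to the preceding theorem, taking as the candidate for the extended form the specialisation of $q$ along $T\mapsto 0$. I read ``stably extended from $A$'' as ``stably extended from $R$'', and $\Bar{q}$ as the reduction of $q$ modulo $\mathcal{J}_A=TR[T]\cap A$. The setup uses the composite
\[
R\hookrightarrow A\hookrightarrow R[T]\xrightarrow{T\mapsto 0}R,
\]
which is the identity on $R$. Its kernel restricted to $A$ is exactly $\mathcal{J}_A$, so $A/\mathcal{J}_A\cong R$ and $R\to A$ is split by $A\to A/\mathcal{J}_A$. Define $\Bar{q}=q\otimes_A A/\mathcal{J}_A$, a quadratic space over $R$, and $q_1=\Bar{q}\otimes_R A$.

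First, by hypothesis there are a quadratic space $q_2$ over $R$ and an integer $m$ with $q\perp \mathbf h^m\simeq (q_2\otimes_R A)\perp \mathbf h^m$. Reducing this isometry modulo $\mathcal{J}_A$ and using the splitting gives $\Bar{q}\perp \mathbf h^m\simeq q_2\perp \mathbf h^m$ over $R$. Base-changing back to $A$ gives
\[
q_1\perp\mathbf h^m\simeq (q_2\otimes_R A)\perp \mathbf h^m\simeq q\perp \mathbf h^m .
\]

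Second, I check that $q_1$ has Witt index $\geq d+1$. By hypothesis $\Bar{q}\simeq q_0\perp H(P)$ with $P$ a projective $R$-module of rank $\geq d+1$. Hence $q_1\simeq (q_0\otimes_R A)\perp H(P\otimes_R A)$, and $P\otimes_R A$ is projective of the same rank.

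Third, I apply the preceding theorem to $q_1$: it is cancellative, so taking $q''=\mathbf h^m$ in the definition of cancellative yields $q_1\simeq q$. Thus $q$ is extended from $R$.

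I expect the only delicate point to be the bookkeeping in the first step. One must make sure reduction modulo $\mathcal{J}_A$ is compatible with the identification $(q_2\otimes_R A)\otimes_A A/\mathcal{J}_A\cong q_2$, which holds because the composite $R\to A\to A/\mathcal{J}_A$ is the identity. One must also interpret ``Witt index $\Bar{q}\geq d+1$'' so that it transfers to $q_1$. If instead the Witt index hypothesis were meant for $q$ itself, I would apply the theorem to $q$ rather than $q_1$, and the argument would be the same.
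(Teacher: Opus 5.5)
Your argument is correct and is essentially the ``immediate consequence'' the paper intends; the paper writes out no further proof. You specialise along $A\to A/\mathcal{J}_A\cong R$, show that $q$ and $\Bar{q}\otimes_R A$ become isometric after adding $\mathbf h^m$, and then cancel $\mathbf h^m$ using the theorem applied to whichever of the two has Witt index $\geq d+1$. Your reading of the misstated corollary (``extended from $R$'', with $\Bar{q}=q\otimes_A A/\mathcal{J}_A$) is the sensible one, and the splitting $R\to A\to A/\mathcal{J}_A$ handles the bookkeeping correctly.
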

\noindent
\section*{Acknowledgments}
The author acknowledges the financial support provided by the Institute Post-Doctoral Fellowship, Indian Institute of Technology Madras, during the course of this research.
 \bibliographystyle{amsalpha}
\bibliography{ref.bib}  

\end{document}